\tikzset{stretch/.initial=1}
\newcommand\drawloop[4][]%
\def\@tocline#1#2#3#4#5#6#7{\relax
  \ifnum #1>\c@tocdepth 
  \else
    \par \addpenalty\@secpenalty\addvspace{#2}%
    \begingroup \hyphenpenalty\@M
    \@ifempty{#4}{%
      \@tempdima\csname r@tocindent\number#1\endcsname\relax
    }{%
      \@tempdima#4\relax
    }%
    \parindent\z@ \leftskip#3\relax \advance\leftskip\@tempdima\relax
    \rightskip\@pnumwidth plus4em \parfillskip-\@pnumwidth
    #5\leavevmode\hskip-\@tempdima
      \ifcase #1
       \or\or \hskip 1em \or \hskip 2em \else \hskip 3em \fi%
      #6\nobreak\relax
    \dotfill\hbox to\@pnumwidth{\@tocpagenum{#7}}\par
    \nobreak
    \endgroup
  \fi}
\newtheorem{theorem}{Theorem}[section]
\newtheorem{lemma}[theorem]{Lemma}
\newtheorem{corollary}[theorem]{Corollary}
\newtheorem{proposition}[theorem]{Proposition}
\newtheorem{question}[theorem]{Question}
\theoremstyle{definition}
\newtheorem{defn}[theorem]{Definition}
\newtheorem{remark}[theorem]{Remark}
\newcommand{\mc}{\mathcal}
\newcommand{\mb}{\mathbb}
\DeclareMathOperator{\aff}{aff}
\DeclareMathOperator{\q}{c}
\DeclareMathOperator{\co}{\circ\hspace{-0.02 cm}}
\DeclareMathOperator{\tIm}{Im}
\begin{document}

\title{A refinement of Cauchy-Schwarz complexity}

\author{Pablo Candela}
\address{Universidad Aut\'onoma de Madrid and ICMAT\\ Departamento de Matem\'aticas, Universidad Aut\'onoma de Madrid   (office 212)
Ciudad Universitaria de Cantoblanco\\ Madrid 28049\\ Spain}
\email{pablo.candela@uam.es}

\author{Diego Gonz\'alez-S\'anchez${}^*$}
\address{MTA Alfr\'ed R\'enyi Institute of Mathematics, Re\'altanoda u. 13-15.\\
Budapest, Hungary, H-1053}
\email{diegogs@renyi.hu}
\thanks{${}^*$ Corresponding author.}

\author{Bal\'azs Szegedy}
\address{MTA Alfr\'ed R\'enyi Institute of Mathematics, Re\'altanoda u. 13-15.\\
Budapest, Hungary, H-1053}
\email{szegedyb@gmail.com}

\begin{abstract}
We introduce a notion of complexity for systems of linear forms called \emph{sequential Cauchy-Schwarz complexity}, which is parametrized by two positive integers $k,\ell$ and refines the notion of Cauchy-Schwarz complexity introduced by Green and Tao. We prove that if a system of linear forms has sequential Cauchy-Schwarz complexity at most $(k,\ell)$ then any average of 1-bounded functions over this system is controlled by the $2^{1-\ell}$-th power of the Gowers $U^{k+1}$-norms of the functions. For $\ell=1$ this agrees with Cauchy-Schwarz complexity, but for $\ell>1$ there are systems that have sequential Cauchy-Schwarz complexity at most $(k,\ell)$ whereas their Cauchy-Schwarz complexity is greater than $k$. Our main application illustrates this with systems over a prime field $\mb{F}_p$, denoted by $\Phi_{k,M}$, which can be viewed as $M$-dimensional  arithmetic progressions of length $k$. For each $M\geq 2$ we prove that $\Phi_{k,M}$ has sequential Cauchy-Schwarz complexity at most $(k-2,|\Phi_{k,M}|)$ (where $|\Phi_{k,M}|$ is the number of forms in the system), whereas the Cauchy-Schwarz complexity of $\Phi_{k,M}$ can be greater than $k-2$. Thus we obtain polynomial true-complexity bounds for $\Phi_{k,M}$ with exponent $2^{-|\Phi_{k,M}|}$. A recent general theorem of Manners, proved independently with different methods, implies a similar application but with different polynomial true-complexity bounds, as explained in the paper. In separate work, we use our application to give a new proof of the inverse theorem for Gowers norms on $\mb{F}_p^n$, and related results on ergodic actions of $\mb{F}_p^{\omega}$.
\end{abstract}
\keywords{Cauchy-Schwarz complexity, true complexity, generalized von Neumann theorem}
\maketitle

\section{introduction}

\subsection{Background}\hfill\\
\noindent Let $\mb{F}_p$ denote the finite field of prime order $p$. A system of $r$ linear forms in $d$ variables over $\mb{F}_p$ is a collection $\Psi=\{\psi_1,\ldots,\psi_r\}$ of linear maps $\psi_i:\mb{F}_p^d\to \mb{F}_p$. Such a system can also be viewed as a matrix in $\mb{F}_p^{r\times d}$ in which the $i$-th row vector $(\psi_{i,j})_{j\in [d]}\in \mb{F}_p^d$ corresponds to the form $\psi_i$ via the formula $\psi_i(x_1,\ldots,x_d):= \psi_{i,1}x_1+\cdots+\psi_{i,d} x_d$ for $x=(x_1,\ldots,x_d)\in \mb{F}_p^d$; in a slight abuse of notation, we shall denote this matrix also by $\Psi$. More generally, for any vector space $\mb{F}_p^n$ over $\mb{F}_p$, we let any vector $v\in \mb{F}_p^d$ (in particular any row $\psi_i$) act as the homomorphism $(\mb{F}_p^n)^d\to \mb{F}_p^n$, $(x_1,\ldots,x_d)\mapsto v(x_1,\ldots,x_d):= v_1x_1 + \cdots + v_d x_d$. Given complex-valued functions $f_1,\ldots,f_r$ on $\mb{F}_p^n$, we then define the \emph{$\Psi$-average} of these functions by the formula 
\begin{equation}\label{eq:psiav}
\Lambda_\Psi(f_1,\ldots,f_r) := \mb{E}_{x_1,\ldots,x_d \in \mb{F}_p^n} \prod_{i\in [r]} f_i\big(\psi_i(x_1,\ldots,x_d) \big).
\end{equation}
The analysis of such averages powers a large part of arithmetic combinatorics and related fields. In particular, if the functions $f_i$ are all equal to the indicator function $1_A$ of some set $A\subset \mb{F}_p^n$, the corresponding average $\Lambda_\Psi(1_A,\ldots,1_A)$ is the normalized count of configurations of the form $\Psi(x_1,\ldots,x_d)$ that are contained in $A$ (i.e.\ that have all of their $r$ components in $A$). These configuration counts can then be studied via the analysis of these averages. In arithmetic combinatorics, this analysis has focused on two regimes. The first and most classical one (often referred to as the \emph{integer setting}) considers linear forms over the integers, i.e.\ with each form $\psi_i$ being a homomorphism $\mb{Z}^d\to\mb{Z}$. Here the analysis is often carried out more conveniently in the setting of cyclic groups of large prime order. This was the setting in which, for instance, Gowers obtained his celebrated results on Szemer\'edi's theorem \cite{Go}, and then Green and Tao developed their programme toward counting linear configurations in the prime numbers \cite{GTlin}. In the present setup, this can be viewed as the regime where the dimension of $\mb{F}_p^n$ is fixed to $n=1$ and the prime $p$ is allowed to grow unbounded. The second principal regime, known as the \emph{finite field setting}, takes $p$ to be a fixed prime and instead allows the dimension $n$ to grow unbounded; for more background on this setting we refer to the survey \cite{WolfFF}.

In the analysis of multilinear averages of the form \eqref{eq:psiav}, the uniformity norms introduced by Gowers in \cite{Go} have become standard tools. Let us recall briefly that there is one such norm for each integer $k\geq 2$, called the \emph{$U^k$-norm}, defined on the space of complex-valued functions $f$ on $\mb{F}_p^n$ (it can be defined more generally for bounded Haar-measurable functions on any compact abelian group), and denoted by $\|f\|_{U^k}$. We refer to \cite[\S 11.1]{T&V} for the definition and basic background on these norms. 

Among the main facts that make the uniformity norms so useful in the analysis of $\Psi$-averages, there is a family of results that provide upper bounds, for a large class of such averages, in terms of the uniformity norms of the functions involved in the averages. The first example of such results is a theorem of Gowers \cite[Theorem 3.2]{Go} concerning averages over $k$-term arithmetic progressions (corresponding to the 2-variable system $\Psi=\{x_1,x_1+x_2,\ldots,x_1+(k-1)x_2\}$), which was crucial for the effective proof of Szemer\'edi's theorem given in \cite{Go}. Let us say that a complex-valued function $f$ is \emph{1-bounded} if the modulus $|f|$ is at most $1$ everywhere. In the present setup, this theorem of Gowers states that for every collection of 1-bounded functions $f_1,\ldots,f_k:\mb{F}_p^n\to\mb{C}$ with $k\leq p$, we have
\begin{equation}\label{eq:GVNorig}
\big|\mb{E}_{x_1,x_2 \in \mb{F}_p^n} f_1(x_1)f_2(x_1+x_2)\cdots f_k(x_1+(k-1)x_2)\big|\leq \min_{i\in [k]} \|f_i\|_{U^{k-1}}. 
\end{equation}
Gowers's proof of this estimate consisted in an iterated application of the Cauchy-Schwarz inequality combined with judicious changes of variables. This result was then extended from arithmetic progressions to a large class of systems by Green and Tao in \cite{GTlin}, where such estimates were named \emph{generalized non Neumann theorems}. A key ingredient for this extension was the introduction of a notion of \emph{complexity} for systems of linear forms, whereby, if a system $\Psi$ has complexity at most $k$, then any $\Psi$-average of 1-bounded functions can always be controlled by the $U^{k+1}$-norm of the functions, in the sense that there is a corresponding version of estimate \eqref{eq:GVNorig} for this system. To recall this in more detail, let us begin with the formal definition of this complexity notion, which was later called \emph{Cauchy-Schwarz complexity} by Gowers and Wolf; see \cite[Definition 1.1]{G&W1} (we give the definition in the present setting of $\mb{F}_p^n$).
\begin{defn}[Cauchy-Schwarz complexity over $\mb{F}_p$]\label{def:CS}
Let $\Psi$ be a system of $r$ linear forms in $d$ variables over $\mb{F}_p$. For $i\in [r]$, we say that $\Psi$ has \emph{Cauchy-Schwarz complexity at most $k$ at $i$ over $\mb{F}_p$} if the set of forms $\{\psi_j: j\in [r]\setminus\{i\}\}$ can be covered by $k+1$ subsets (or fewer) such that $\psi_i$ does not lie in the $\mb{F}_p$-linear-span of any of these subsets. If this holds for each $i\in [r]$, then we say that $\Psi$ has \emph{Cauchy-Schwarz complexity at most} $k$.
\end{defn}
\noindent We usually shorten the expression ``Cauchy-Schwarz complexity at most $k$ at $i$ over $\mb{F}_p$" to ``$\mb{F}_p$-CS-complexity $\leq k$ at $i$", or just to ``CS-complexity $\leq k$ at $i$" when the field $\mb{F}_p$ is clear. Let us write $s_{\textup{CS}(i)}(\Psi)$ for the smallest integer $k$ such that $\Psi$ has CS-complexity $\leq k$ at $i$, and let us write $s_{\textup{CS}}(\Psi)$ for the smallest integer $k$ such that $\Psi$ has CS-complexity $\leq k$. (Thus $s_{\textup{CS}(i)}$, $s_{\textup{CS}}$ implicitly depend on $p$.)

The result of Green and Tao extending the estimate \eqref{eq:GVNorig} was developed in a setting technically different from the present one, but it is not hard to extract the essence of their proof to obtain the following version of their result (see \cite[Theorem 2.3]{G&W1}, or \cite[Proposition 1.1]{Man} for a formulation closer to the following version).
\begin{theorem}\label{thm:basicGT}
Let $\Psi$ be a system of $r$ linear forms in $d$ variables over $\mb{F}_p$, let $i\in[r]$ and suppose that $s_{\textup{CS}(i)}(\Psi)\leq k$. Then for every collection of 1-bounded functions $f_1,\ldots,f_r:\mb{F}_p^n\to \mb{C}$ we have
\begin{equation}\label{eq:basicGT}
\big| \Lambda_\Psi(f_1,\ldots,f_r)\big| \leq \|f_i\|_{U^{k+1}}.
\end{equation}
\end{theorem}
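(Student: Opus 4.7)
The plan is to prove the estimate by iterating the Cauchy--Schwarz inequality exactly $k+1$ times, applying one iteration per subset in a cover witnessing $s_{\textup{CS}(i)}(\Psi)\leq k$. This is essentially the Green--Tao generalized von Neumann argument, specialized to the finite-field setting.

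First I would fix (disjoint, after trimming) subsets $S_0,\ldots,S_k$ of $\{\psi_j:j\in [r]\setminus\{i\}\}$ whose union is the full set, with $\psi_i\notin\Span_{\mathbb{F}_p}(S_m)$ for each $m$; and for each $m$ pick a vector $v_m\in\mathbb{F}_p^d$ such that $\psi_j\cdot v_m=0$ for every $j\in S_m$ while $c_m:=\psi_i\cdot v_m\neq 0$ (which exists by duality). The key manipulation at iteration $m$ is to use the change of variable $x\mapsto x+v_m t$ to rewrite the $\Psi$-average as an expectation in which an auxiliary variable $t\in\mathbb{F}_p^n$ appears only in the forms outside $S_m$ (because $\psi_j(x+v_m t)=\psi_j(x)+(\psi_j\cdot v_m)\,t$), and then to apply Cauchy--Schwarz in $x$. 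The factors indexed by $S_m$ then detach and are discarded using the trivial bound $|f_j|\leq 1$; the remaining forms (including $\psi_i$) are doubled, producing an average over the enlarged variable space $(\mathbb{F}_p^n)^{d+1}$ in which each surviving $\psi_j$ is replaced by the pair $\{\psi_j,\;\psi_j+(\psi_j\cdot v_m)\,h_m\}$, for a new ``doubling'' variable $h_m\in\mathbb{F}_p^n$.

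Iterating for $m=0,1,\ldots,k$, the crucial invariant is that at step $m+1$ one may take the separator to be the lift by zero of the original $v_{m+1}\in\mathbb{F}_p^d$ to the enlarged space: since all previous doublings only add shifts of the form $(\psi_j\cdot v_l)\,h_l$ with $l\leq m$, the lifted separator still annihilates every doubled copy of each form in $S_{m+1}$ (its $h_l$-components vanish) and remains nonzero on every doubled copy of $\psi_i$ (the projection to the first $d$ coordinates evaluates to $c_{m+1}\neq 0$). After $k+1$ rounds, $|\Lambda_\Psi|^{2^{k+1}}$ is bounded by an average indexed by the cube $\{0,1\}^{k+1}$ of products of $f_i$ (or its conjugate, according to cube parity) evaluated at $\psi_i(x)+\sum_{m=0}^k \omega_m c_m h_m$. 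Since $\psi_i\neq 0$ (otherwise the CS-complexity hypothesis fails vacuously, as $0\in\Span(S_m)$), the substitution $y:=\psi_i(x)$ together with the rescaling $h_m\mapsto c_m^{-1}h_m$ (valid since each $c_m\neq 0$) identifies this expression with $\|f_i\|_{U^{k+1}}^{2^{k+1}}$, yielding \eqref{eq:basicGT} upon taking $2^{k+1}$-th roots.

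The main obstacle is the bookkeeping around how the forms evolve under the iterated Cauchy--Schwarz doublings, and in particular verifying the invariant above so that the original linear-algebra witnesses $v_m$ continue to perform the required separation in the progressively enlarged variable spaces. Once this invariant is set up, each individual Cauchy--Schwarz application is a routine computation of the type familiar from standard manipulations of the Gowers norms.
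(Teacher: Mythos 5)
Your proof is correct. The paper does not prove Theorem~\ref{thm:basicGT} itself --- it presents it as a known reformulation of a result of Green and Tao, citing \cite[Theorem~2.3]{G&W1} and \cite[Proposition~1.1]{Man}, and then uses it only as a black-box base case in the induction proving Theorem~\ref{thm:main} --- so there is no in-paper proof to compare against line by line. Your argument is the standard iterated Cauchy--Schwarz scheme from those references, and the essential points all check out: the dual vector $v_m$ exists precisely because $\psi_i\notin\Span(S_m)$ (so that $\Span(S_m)^\perp\not\subset\psi_i^\perp$); shifting $x\mapsto x+v_m t$ decouples the $S_m$-factors from the auxiliary variable $t$, so one Cauchy--Schwarz detaches them and they can be dropped by $1$-boundedness; and, because each later doubling variable $h_l$ enters a surviving form only through the additive shift $(\psi_j\cdot v_l)h_l$, the lift-by-zero of $v_{m'}$ for $m'>m$ still annihilates every doubled copy of every form in $S_{m'}$ while returning $c_{m'}\neq 0$ on every doubled copy of $\psi_i$. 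The closing substitutions $y:=\psi_i(x)$ and $h_m\mapsto c_m^{-1}h_m$ are measure-preserving (as $\psi_i\neq 0$ and $c_m\neq 0$, both forced by the hypothesis), identifying the $2^{k+1}$-fold doubled average with $\|f_i\|_{U^{k+1}}^{2^{k+1}}$. One structural remark worth making: in the paper's own Cauchy--Schwarz manipulations (in the proof of Theorem~\ref{thm:main}), each step peels off a \emph{single} form from a witness sequence after a change of basis aligning it with the first variable, because the sequential notion requires re-establishing the cover after each removal; your block-wise iteration, which discards an entire cover class $S_m$ per step, is the natural variant when one works with a fixed cover witnessing $s_{\textup{CS}(i)}(\Psi)\le k$, and it would not serve directly as the inductive engine for Theorem~\ref{thm:main}.
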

\noindent Estimates of this kind are often used to reduce questions about counting linear configurations in sets to questions pertaining to the analysis of uniformity norms. This analysis becomes more intricate (and the associated bounds worsen) as $k$ increases. This motivates the following question, which was posed and investigated by Gowers and Wolf in \cite{G&W1}.
\begin{question}
Given a system $\Psi$ of $r$ linear forms, what is the least integer $k$ for which an estimate of the form \eqref{eq:basicGT} holds for every collection of $1$-bounded functions $f_i$, $i\in [r]$?
\end{question}
\noindent The study of this question in \cite{G&W1} started from the observation that there are systems for which the notion of complexity from Definition \ref{def:CS} does not yield the optimal answer to the question (i.e.\ the least integer $k$). Accordingly, Gowers and Wolf defined a notion of \emph{true complexity} for systems of linear forms, and coined the term \emph{Cauchy-Schwarz complexity} to distinguish the notion in Definition \ref{def:CS} from true complexity. Let us recall the definition of true complexity in the present setup.
\begin{defn}\label{def:TC}
Let $\Psi$ be a system of $r$ linear forms in $d$ variables over $\mb{F}_p$.  We say that $\Psi$ has \emph{true complexity at most $k$ at} $i\in [r]$ over $\mb{F}_p$ if there is a function $\varepsilon:\mb{R}_{>0}\to \mb{R}_{>0}$ such that $\varepsilon(\delta)\to 0$ as $\delta\to 0$, and such that for every collection of 1-bounded functions $f_1,\ldots,f_r:\mb{F}_p^n\to \mb{C}$ the following estimate holds:
\begin{equation}\label{eq:TCest-i}
\big| \Lambda_\Psi(f_1,\ldots,f_r)\big| \leq \varepsilon(\|f_i\|_{U^{k+1}}).
\end{equation}
We write $s_{(i)}(\Psi)$ for the least integer $k$ such that $\Psi$ has true complexity at most $k$ at $i$. If \eqref{eq:TCest-i} holds for every $i\in[r]$, that is if
\begin{equation}\label{eq:TCest}
\big| \Lambda_\Psi(f_1,\ldots,f_r)\big| \leq \min_{i\in [r]}\; \varepsilon(\|f_i\|_{U^{k+1}}),
\end{equation}
then we say that $\Psi$ has \emph{true complexity at most} $k$. We write $s(\Psi)$ for the least integer $k$ such that $\Psi$ has true complexity at most $k$. (Thus $s_{(i)}(\Psi)$ and $s(\Psi)$ may depend on $p$.)
\end{defn}
\noindent  We shall refer to a bound of the form \eqref{eq:TCest-i} for $k=s_{(i)}(\Psi)$ as a \emph{true-complexity bound at} $i$ for $\Psi$, and to a bound of the form \eqref{eq:TCest} for $k=s(\Psi)$ as a \emph{true-complexity bound} for $\Psi$. 

By Theorem \ref{thm:basicGT}, we always have $s(\Psi)\leq s_{\textup{CS}}(\Psi)$. However, as mentioned above, there are systems $\Psi$ for which this inequality is strict. Initial examples of this phenomenon were given in \cite{G&W1}. In subsequent work on this topic, notably by Manners in \cite[\S 2]{Man}, it was shown that even in quite simple families of systems this phenomenon is actually generic. 

In \cite{G&W1}, Gowers and Wolf formulated a conjecture giving an algebraic description of true complexity, purely in terms of the system $\Psi$ (without reference to the uniformity norms). In the integer setting, this conjecture was first settled by Green and Tao  for systems of linear forms satisfying a condition called the \emph{flag property} (see the update \cite{GTarith2} to the paper \cite{GTarith}), and the conjecture was then completely settled recently by Altman in \cite{Altman}. Over $\mb{F}_p$, the conjecture states that the true complexity of a system $\Psi\in \mb{F}_p^{r\times d}$ is equal to the smallest integer $k$ such that the tensor powers\footnote{For any $v\in \mb{F}_p^d$ and $m\in \mb{N}$, the tensor power $v^m$ is the vector $\big(v_{j_1}\cdots \,v_{j_m}:j_1,\ldots,j_m\in [d]\big)\in \mb{F}_p^{d^m}$.} $\psi_1^{k+1},...,\psi_r^{k+1}$ are linearly independent over $\mb{F}_p$. Gowers and Wolf made progress toward the conjecture in several papers \cite{G&W1,G&W2,G&W3}, eventually proving it for sufficiently large values of $p$ in \cite[Theorem 6.1]{G&W3}. This direction was later pursued further by other authors, leading to the result of Hatami, Hatami and Lovett \cite[Theorem 3.17]{HHL}, proving a general form of the  conjecture which settles essentially all its cases of interest in the finite field setting.

An important aspect of the above-mentioned results settling the Gowers--Wolf conjecture is that they were proved using results from higher-order Fourier analysis involving the inverse theorem for the uniformity norms. Because of the bounds in this theorem, its use in these results produces true-complexity bounds where the functions $\varepsilon(\delta)$ in \eqref{eq:TCest} have a poor (i.e.\ slow) decay rate, in particular slower than polynomial in $\delta$. This led to the following question, closely related to \cite[Problem 7.8]{G&W3}, and posed (in essentially equivalent form) in \cite[Question 1.4]{Man}.
\begin{question}[\cite{Man}]\label{Q:TC}
For systems of finite true complexity, can a true-complexity bound always be proved using only finitely many applications of the Cauchy-Schwarz inequality and changes of variables?
\end{question}
\noindent This question is important for quantitative applications, because true-complexity bounds proved in the ``elementary" way described in the question are of much better quality (polynomial in $\delta$) than those proved using the inverse theorem for the $U^k$ norms. In \cite{Man}, Manners initiated the investigation of this  question focusing on systems of six linear forms in three variables, this being the simplest setting where systems can have true complexity strictly smaller than their Cauchy-Schwarz complexity (see \cite[\S 2]{Man}). In particular, Manners answered the  question positively for any such system having true complexity 1, thus obtaining polynomial true-complexity bounds for such systems (see \cite[Theorem 1.5]{Man}). 

\begin{remark}
Soon after the completion of this work and its dissemination in \cite{CGSS-seq-CS-Eurocomb}, independent breakthrough work of Manners appeared in \cite{MannersTC} giving a full affirmative answer to Question \ref{Q:TC}. Our main results, discussed below, are geared towards applications in \cite{CGS} concerning the inverse theorem for Gowers norms over finite fields, and are thus much more specific (but also much less technical) than those in \cite{MannersTC}. See also Remark \ref{rem:quant} regarding quantitative aspects of these results in relation to \cite{MannersTC}. 
\end{remark}

\subsection{Statements of main results}\hfill\\
In this paper we introduce the following definition of complexity, which refines Cauchy-Schwarz complexity and yields a positive answer to Question \ref{Q:TC} for some classes of systems for which such an answer is not accessible using only Cauchy-Schwarz complexity.

Given a set $S\subset \mb{F}_p^d$, we write $\langle S\rangle$ for the vector subspace of $\mb{F}_p^d$ generated by $S$, and we write $S^c$ for the complement $\mb{F}_p^d\setminus S$.

\begin{defn}\label{def:seqCS}
Let $\Psi=\{\psi_1,\ldots,\psi_r\}$ be a system of linear forms over $\mb{F}_p$. For $k,\ell\in \mb{N}$, we say that $\Psi$ has \emph{sequential CS-complexity at most $(k,\ell)$ at $i\in [r]$} if there is a sequence $(\psi^{(j)}:=\psi_{i_j})_{j\in [\ell]}$ in $\Psi$, with $\psi^{(\ell)}=\psi_i$, such that for every $j\in [\ell]$ there exist sets $C_{j,1},\ldots,C_{j,k+1}\subset \Psi$ such that $\Psi\setminus \{\psi^{(1)},\ldots,\psi^{(j)}\}\subset \bigcup_{t=1}^{k+1} C_{j,t}$ and $\{\psi^{(1)},\ldots,\psi^{(j)}\}\subset \langle C_{j,t}\rangle^c$ for every $t\in [k+1]$. We say that $\Psi$ has \emph{sequential CS-complexity at most $(k,\ell)$} if it has sequential CS-complexity at most $(k,\ell)$ at every $i\in [r]$.
\end{defn}
\noindent 
Note that the case $\ell=1$ of sequential CS-complexity is the CS-complexity notion of Green and Tao. The main result of this paper is the following.

\begin{theorem}\label{thm:main}
Let $\Psi$ be a system of $r$ linear forms over $\mb{F}_p$ and suppose that $\Psi$ has sequential CS-complexity at most $(k,\ell)$ at $i\in [r]$. Then, for every collection of 1-bounded functions $f_1,\ldots,f_r:\mb{F}_p^n\to \mb{C}$, we have
\begin{equation}\label{eq:main}
\big| \Lambda_\Psi (f_1,\ldots,f_r) \big|\le \|f_{i}\|_{U^{k+1}}^{ 2^{1-\ell}}.
\end{equation}
\end{theorem}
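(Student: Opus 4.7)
The plan is induction on $\ell$. The base case $\ell=1$ coincides with the generalized von Neumann theorem of Green--Tao recalled here as Theorem \ref{thm:basicGT}. For the inductive step, suppose the estimate holds for $\ell-1$, let $\Psi$ have sequential CS-complexity at most $(k,\ell)$ at $i$, and fix a witnessing sequence $\psi^{(1)},\ldots,\psi^{(\ell)}=\psi_i$ and covers $(C_{j,t})_{j\in[\ell],\,t\in[k+1]}$. I would perform a single Cauchy--Schwarz doubling that ``absorbs'' the function $f_{i_1}$ attached to $\psi^{(1)}$, reducing the problem to a $\Psi'$-average for a suitable doubled system $\Psi'$ to which the inductive hypothesis applies.

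After a linear change of variables on $\mb{F}_p^d$ I may assume $\psi^{(1)}=x_1$. Pulling the factor $f_{i_1}(x_1)$ out of the $x_1$-expectation and applying Cauchy--Schwarz in $x_1$ yields
\[
|\Lambda_\Psi(f_1,\ldots,f_r)|^2 \;\le\; \mb{E}_{x_1,x_2,\ldots,x_d,x'_2,\ldots,x'_d}\; \prod_{j\ne i_1} f_j\bigl(\psi_j(x_1,x_2,\ldots,x_d)\bigr)\,\overline{f_j\bigl(\psi_j(x_1,x'_2,\ldots,x'_d)\bigr)},
\]
and the right-hand side is $|\Lambda_{\Psi'}|$, where $\Psi'$ is the system in $2d-1$ variables containing, for every $j\ne i_1$, a ``base copy'' $\psi_j(x_1,x_2,\ldots,x_d)$ attached to $f_j$ and a ``shifted copy'' $\psi_j(x_1,x'_2,\ldots,x'_d)$ attached to $\overline{f_j}$.

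The aim of the inductive step is to verify that $\Psi'$ has sequential CS-complexity at most $(k,\ell-1)$ at the index $i'$ of the base copy of $\psi_i$; once this is done, the inductive hypothesis yields $|\Lambda_{\Psi'}|\le\|f_i\|_{U^{k+1}}^{2^{2-\ell}}$ and hence $|\Lambda_\Psi|\le\|f_i\|_{U^{k+1}}^{2^{1-\ell}}$. The candidate witnessing sequence for $\Psi'$ is the sequence of base copies of $\psi^{(2)},\ldots,\psi^{(\ell)}$, and for each $s\in[\ell-1]$, $t\in[k+1]$ the candidate cover is
\[
C'_{s,t}\;=\;\{\psi_j(x_1,\ldots,x_d),\,\psi_j(x_1,x'_2,\ldots,x'_d):\,j\in C_{s+1,t}\}
\]
supplemented by a distribution of the shifted copies of $\psi^{(2)},\ldots,\psi^{(s+1)}$ among the $k+1$ buckets. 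The covering property is immediate from $\bigcup_t C_{s+1,t}\supseteq \Psi\setminus\{\psi^{(1)},\ldots,\psi^{(s+1)}\}$.

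The hard part will be to verify the linear-independence condition: that no base copy of $\psi^{(s')}$, for $s'\le s+1$, lies in $\langle C'_{s,t}\rangle$. Representing elements of $\Psi'$ as vectors in $\mb{F}_p^{2d-1}$ (with the shifted copy of $\psi_j$ viewed as $(\psi_j,\tilde\psi_j)$, where $\tilde\psi_j$ records the non-$x_1$ coefficients), a linear-algebraic computation reduces the required independence for each bucket $t$ to the disjunction that either $\psi^{(s')}\notin\langle C_{s+1,t}\cup\{\psi^{(1)}\}\rangle$ for all $s'\le s+1$, or $\psi^{(1)}\notin\langle C_{s+1,t}\cup\{\psi^{(s'')}:s''\in S_t\}\rangle$, where $S_t$ is the set of indices of shifted copies placed in bucket $t$. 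Neither clause is automatic from the original sequential conditions, and the principal technical step of the proof is to show, by exploiting the freedom in the choice of the original covers together with the interplay among the $\ell$ conditions, that a valid distribution of the shifted copies into the $k+1$ buckets always exists. Completing this distribution argument propagates the sequential structure cleanly through the Cauchy--Schwarz doubling and closes the induction.
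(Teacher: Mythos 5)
Your setup---induction on $\ell$, a single Cauchy--Schwarz in $x_1$ after normalizing $\psi^{(1)}=(1,0^{d-1})$, the doubled system $\Psi'$ in $2d-1$ variables, and taking the base copies of $\psi^{(2)},\ldots,\psi^{(\ell)}$ as the candidate witness sequence for $\Psi'$---is exactly the paper's. But the proof is incomplete at the step you yourself flag as ``the hard part,'' and the difficulty is self-imposed: you try to reuse the step-$(s{+}1)$ cover $C_{s+1,t}$ for \emph{both} the base and the shifted copies, which forces you to distribute the leftover shifted copies of $\psi^{(2)},\ldots,\psi^{(s+1)}$ among buckets and then verify a disjunctive span condition that, as you note, is not automatic. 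You do not prove that such a distribution exists, so the induction does not close.

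The missing idea is to use \emph{different} covers for the two halves of $\Psi'$, and there is no need for any clever distribution. Embedding $\Psi'\subset\mb{F}_p^{2d-1}$, the base copies lie in $U_1=\{y: y_{d+1}=\cdots=y_{2d-1}=0\}$ and the shifted copies in $U_2=\{y: y_2=\cdots=y_d=0\}$, with $U_1\cap U_2=\langle(1,0^{2d-2})\rangle$ corresponding to $\psi^{(1)}$ under both identifications with $\mb{F}_p^d$. For the $U_1$ side use the step-$(s{+}1)$ cover as you do, giving subspaces $D_{1,t}\subset U_1$; but for the $U_2$ side use the \emph{step-$1$} cover of $\Psi\setminus\{\psi^{(1)}\}$, giving $D_{2,t}\subset U_2$. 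The step-$1$ cover already covers \emph{every} shifted copy (in particular those of $\psi^{(2)},\ldots,\psi^{(s+1)}$), so nothing is left over, and by the step-$1$ condition each $D_{2,t}$ avoids $\psi^{(1)}$, i.e.\ $D_{2,t}\cap U_1=\{0\}$; likewise $D_{1,t}\cap U_2=\{0\}$ since $\langle C_{s+1,t}\rangle$ avoids $\psi^{(1)}$. The elementary lemma $(D_1+D_2)\cap U_1=D_1$ when $D_1\cap U_2=D_2\cap U_1=\{0\}$ (Proposition~\ref{prop:basic-lin-algebra}) then shows that each combined bucket $C'_t\cup E'_t$ has span meeting $U_1$ only in $D_{1,t}$, which excludes the base copies of $\psi^{(2)},\ldots,\psi^{(s+1)}$ by the step-$(s{+}1)$ condition. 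This eliminates your disjunctive case analysis entirely: the clause ``$\psi^{(1)}\notin\langle\text{shifted-copy cover}\rangle$'' holds by construction because one uses the step-$1$ cover for the shifted copies rather than $C_{s+1,t}$.
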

\noindent Theorem \ref{thm:basicGT} is the case $\ell=1$ of Theorem \ref{thm:main}. We prove Theorem \ref{thm:main} in Section \ref{sec:mainrespf}.

Our main applications concern translation invariant systems.
\begin{defn}\label{def:transinv}
A system $\Psi\in \mb{F}_p^{r\times d}$ is \emph{translation invariant} if the image $\tIm(\Psi)=\Psi(\mb{F}_p^d)$ is a translation-invariant set, meaning that for every $y=(y_1,\ldots,y_r)\in \tIm(\Psi)$ and $\lambda\in\mb{F}_p$ we have $(y_1+\lambda,\ldots,y_r+\lambda)\in \tIm(\Psi)$.
\end{defn}
\noindent For $k,M\in \mb{N}$, we define the set $S_{k,M} :=\{z\in [0,p-1]^M: z_1+\cdots+z_M < k\}$, where addition is performed in $\mb{Z}$. The set $S_{k,M}$ can be viewed as a subset of $\mb{F}_p^M$ via the usual identification of $\mb{F}_p^M$ with $[0,p-1]^M$. We can then define the following system of linear forms in $M+1$ variables:
\begin{equation}\label{eq:PhikMdef}
\Phi_{k,M} := \big\{\phi_z(x,t_1,\ldots,t_M):=x+z_1t_1+\cdots +z_M t_M \;|\; z=(z_1,\ldots,z_M)\in S_{k,M}\big\}.
\end{equation}
For $k>M(p-1)+1$ we have $S_{k,M}=S_{M(p-1)+1,M}=[0,p-1]^M$, so $\Phi_{k,M}=\Phi_{M(p-1)+1,M}$. Therefore we can assume without loss of generality that $k\leq M(p-1)+1$. 

These systems $\Phi_{k,M}$ are translation invariant. The corresponding configurations are natural multivariable generalizations of arithmetic progressions; indeed $\Phi_{k,1}$ corresponds to arithmetic progressions of length $k$. As is well-known in this area, for $k\le p$ the system $\Phi_{k,1}$ has true complexity $k-2$, equal to its Cauchy-Schwarz complexity, with an optimal true-complexity bound given by Gowers's estimate \eqref{eq:GVNorig}. The question of how these facts might extend to $\Phi_{k,M}$ for $M>1$ turns out to be an interesting one. 

In Section \ref{sec:transinvapp} we first show that for every $M,p$ and $k\leq M(p-1)+1$, the system $\Phi_{k,M}$ has true complexity at least $k-2$; see Proposition \ref{prop:TrueCompSk}. It turns out that this true complexity is also \emph{at most} $k-2$ and that this can be proved with polynomial true-complexity bounds, but for $M>1$ this requires going beyond Cauchy-Schwarz complexity, unlike in the case $M=1$. The reason for this is that, for $M>1$, as soon as we move into the case $k> p$, it is no longer true in general that the Cauchy-Schwarz complexity of $\Phi_{k,M}$ is $k-2$. We illustrate this using results on hyperplane coverings over $\mb{F}_p$; see Remark \ref{rem:highchar}, Proposition \ref{prop:BallSerraApp} and Corollary  \ref{cor:CS>TC}.
However, using \emph{sequential} Cauchy-Schwarz complexity instead, and applying Theorem \ref{thm:main}, we obtain the following result, which extends Gowers's estimate \eqref{eq:GVNorig} to all systems $\Phi_{k,M}$.
\begin{theorem}\label{thm:GGVN}
Let $p$ be a prime, let $M\in \mb{N}$, and let $k\in [M(p-1)+1]$. Then the system $\Phi_{k,M}$ has true complexity $k-2$. Moreover, there is a constant $c=c_{k,M,p}\in (0,1]$ such that for every collection of $1$-bounded functions $(f_z:\mb{F}_p^n\to\mb{C})_{z\in S_{k,M}}$ we have
\begin{equation}\label{eq:GGVN}
\big| \Lambda_{\Phi_{k,M}}\big( (f_z)_{z\in S_{k,M}} \big)\big| \leq  \min_{z\in S_{k,M}} \|f_z\|_{U^{k-1}}^c.
\end{equation}
We can take $c=1$ for $k\leq p$ and $c=2^{1-|S_{k,M}|}$ for $k>p$.
\end{theorem}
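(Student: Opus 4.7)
The plan is to split the argument according to whether $k\leq p$ or $k>p$, applying in each case one of the generalized von Neumann theorems developed earlier. The lower bound $s(\Phi_{k,M})\geq k-2$ is already supplied by Proposition \ref{prop:TrueCompSk}, so the work is entirely in establishing matching upper bounds with the claimed polynomial exponents.

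For $k\leq p$ I would show that $\Phi_{k,M}$ has ordinary Cauchy-Schwarz complexity at most $k-2$, so that Theorem \ref{thm:basicGT} yields \eqref{eq:GGVN} with $c=1$. Fix a target index $z^*\in S_{k,M}$ and write $s := z^*_1+\cdots+z^*_M$, so $s\in\{0,1,\ldots,k-1\}$. Partition the remaining forms by the level sets $C_u := \{\phi_z : z\in S_{k,M},\; z_1+\cdots+z_M = u\}$ for $u\in\{0,1,\ldots,k-1\}\setminus\{s\}$; the $\mb{F}_p$-linear span of $C_u$ sits inside the hyperplane of forms whose $t$-coefficients sum to $u$ modulo $p$. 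Since $k\leq p$, the integers $0,1,\ldots,k-1$ are distinct modulo $p$, so this hyperplane misses $\phi_{z^*}$, giving the required cover by $k-1$ sets.

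For $k>p$ (which by the hypothesis $k\leq M(p-1)+1$ forces $M\geq 2$), the previous partition fails because distinct coordinate sums in $\{0,\ldots,k-1\}$ can coincide modulo $p$. Instead I would prove that $\Phi_{k,M}$ has sequential Cauchy-Schwarz complexity at most $(k-2,|\Phi_{k,M}|)$ at every form, and then apply Theorem \ref{thm:main} with $\ell=|\Phi_{k,M}|=|S_{k,M}|$ to obtain \eqref{eq:GGVN} with $c=2^{1-|S_{k,M}|}$. Concretely, for a fixed target $z^*$ I would exhibit an enumeration $z^{(1)},\ldots,z^{(\ell)}=z^*$ of $S_{k,M}$ together with, for every $j\in[\ell]$, a family of $k-1$ subsets of $\Phi_{k,M}$ whose union contains the forms indexed by $S_{k,M}\setminus\{z^{(1)},\ldots,z^{(j)}\}$ and whose $\mb{F}_p$-linear spans each avoid $\phi_{z^{(1)}},\ldots,\phi_{z^{(j)}}$. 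The main obstacle is producing these covers for small $j$, where most of $S_{k,M}$ still requires covering and $k-1$ singletons are not enough: one must find $k-1$ genuinely large sets whose affine hulls simultaneously miss the $j$ already-removed points. The plan is to order $S_{k,M}$ by some natural total order (for instance by coordinate sum) with $z^*$ placed last, and at step $j$ to build the $k-1$ covers from affine hyperplanes in $\mb{F}_p^M$ taken along several carefully chosen linear functionals, not only the coordinate sum, so that each individual hyperplane misses the finite removed set $\{z^{(1)},\ldots,z^{(j)}\}$. For $j\geq |S_{k,M}|-(k-1)$ the remaining set has at most $k-1$ elements and singleton covers $\{\phi_z\}$ automatically suffice, since the linear span $\{\lambda\phi_z:\lambda\in\mb{F}_p\}$ contains no other $\phi_{z'}$ (both forms having $x$-coefficient $1$ would force $\lambda=1$ and $z'=z$). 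Once the enumeration and covers are produced, Theorem \ref{thm:main} delivers the upper bound; combining with Proposition \ref{prop:TrueCompSk} then yields $s(\Phi_{k,M})=k-2$.
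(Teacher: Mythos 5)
Your high-level architecture matches the paper's: use Proposition \ref{prop:TrueCompSk} for the lower bound, Theorem \ref{thm:basicGT} for $k\leq p$, and Theorem \ref{thm:main} together with a witness sequence of length $|S_{k,M}|$ for $k>p$. But both branches contain genuine gaps.

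In the $k\leq p$ branch your cover is incorrect for $M\geq 2$ and $z^*\neq 0^M$. Taking only the level sets $C_u$ with $u\neq s$ omits every form $\phi_z$ with $z\neq z^*$ but $z_1+\cdots+z_M=s$; for $s\geq 1$ and $M\geq 2$ the slice $C_s$ contains many such points, so $\bigcup_{u\neq s}C_u$ is not a cover of $S_{k,M}\setminus\{z^*\}$. (For example, $k=3$, $M=2$, $z^*=(1,1)$, $s=2$: the points $(2,0)$ and $(0,2)$ lie in $C_2$ and are covered by none of your $C_u$.) The claim $s_{\textup{CS}}(\Phi_{k,M})\leq k-2$ is still true, but the cover has to mix hyperplane types: one can take the $k-1-s$ level hyperplanes $\{z: z_1+\cdots+z_M=u\}$ for $u\in[s+1,k-1]$, together with the $s$ coordinate hyperplanes $\{z: z_j=u\}$ for $j\in[M]$, $u\in[0,z^*_j-1]$. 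Every $z\in S_{k,M}\setminus\{z^*\}$ either has $z_j<z^*_j$ for some $j$ (caught by a coordinate hyperplane) or $z_j\geq z^*_j$ for all $j$, forcing $\sum z_j>s$ (caught by a level hyperplane), and since $k\leq p$ all of these exclude $z^*$.

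In the $k>p$ branch the core of the proof is missing. You correctly reduce to exhibiting, for each target, an enumeration of $S_{k,M}$ together with $(k-1)$-covers at every stage, but then only say you would order ``by coordinate sum'' and use ``several carefully chosen linear functionals'': this is the entire difficulty, and ordering by coordinate sum will not produce usable covers precisely because several $z\in S_{k,M}$ share the same residue of coordinate sum mod $p$ when $k>p$. The paper's Proposition \ref{prop:covering-s-k} does the actual work, by induction on $M$, sweeping out $S_{k,M}$ slice by slice along the last coordinate (first $t_M=p-1$, then $t_M=p-2$, and so on down to $t_M=0$, ending at $0^M$); at a stage inside the slice $t_M=j$ it uses the inductive covers of $S_{k-j,M-1}$ inside that slice plus the $p-1$ hyperplanes $\{t_M=j'\}$, $j'\neq j$, giving exactly $k-1$ affine subspaces. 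This also yields the witness sequence for every $z$ simultaneously (as an initial segment of the one global sequence ending at $0^M$), rather than a separate enumeration per target as you propose, and it is why $\ell=|S_{k,M}|$ (attained at $z=0^M$) is the bound that enters the exponent.
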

\begin{remark}\label{rem:quant}
An important aspect here is that the constant $c$ is independent of the dimension $n$. It is likely that this constant can be improved; see Remark \ref{rem:constant}, which describes improvements in certain cases. Note however that, as observed in \cite{Man}, true-complexity bounds modulo $p$ can be unavoidably dependent on $p$ (i.e.\ there are systems for which the function $\varepsilon$ in \eqref{eq:TCest} is an unavoidably increasing function of $p$). Applying the recent general result of Manners \cite[Theorem 1.1.5]{MannersTC} here yields $c=2^{-M}$ where $M\ll |S_{k,M}|^3(\log (|S_{k,M}|)+\log\log(10 p))$.
\end{remark}
\noindent The above examples of systems with sequential CS-complexity at most $(k,\ell)$ and CS-complexity greater than $k$ occur for $k>p$, but note that this phenomenon can also occur for systems in the case $k\leq p$; we give an example in Remark \ref{rem:HCeg}. Such examples show that the refined control on $\Psi$-averages offered by Theorem \ref{thm:main} can be useful not only in the low characteristic case of the finite field setting, but also in the high characteristic case, and thus in the integer setting. On the other hand, we also have examples of systems showing that Theorem \ref{thm:main} does not yield directly polynomial true-complexity bounds for all finite-complexity systems; see Remark \ref{rem:scs-notsuff}.

Finally, let us mention another application of the above results: in the separate paper \cite{CGS} we use Theorem \ref{thm:GGVN} as a central ingredient for an algebraic description of compact nilspaces which admit  strongly equidistributed nilspace morphisms from (the additive groups of) vector spaces $\mb{F}_p^n$. This description in turn is key to a new proof of the inverse theorem for Gowers norms on $\mb{F}_p^n$, and  applications in ergodic theory, given in \cite{CGS}.

\section{Proof of the main result}\label{sec:mainrespf}
\noindent In this section we prove Theorem \ref{thm:main}. The proof uses the following linear-algebraic fact.

\begin{proposition}\label{prop:basic-lin-algebra}
Let $U_1,U_2$ be subspaces of $\mb{F}_p^M$ and for $i\in\{1,2\}$ let $D_i$ be a subspace of $U_i$ such that $D_1\cap U_2=D_2\cap U_1=\{0\}$. Then $(D_1+D_2)\cap U_i=D_i$ for $i=1,2$.
\end{proposition}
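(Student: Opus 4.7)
The statement is a short linear-algebraic fact, so the plan is to give a direct two-inclusion argument for $(D_1+D_2)\cap U_1 = D_1$; the case $i=2$ follows by swapping the roles of the two indices.

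The inclusion $D_1\subseteq (D_1+D_2)\cap U_1$ is immediate, since $D_1\subseteq U_1$ by hypothesis and $D_1\subseteq D_1+D_2$ trivially. For the reverse inclusion I would take an arbitrary $v\in (D_1+D_2)\cap U_1$ and write $v=d_1+d_2$ with $d_i\in D_i$. The key observation is then that $d_2=v-d_1$ lies in $U_1$: indeed $v\in U_1$ by assumption and $d_1\in D_1\subseteq U_1$. Since $d_2\in D_2$ as well, and $D_2\cap U_1=\{0\}$ by hypothesis, we conclude $d_2=0$, whence $v=d_1\in D_1$.

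There is essentially no obstacle here; the only thing to be careful about is making sure that the decomposition $v=d_1+d_2$ is used correctly (note that this decomposition is in fact unique here as a consequence of the conclusion, but uniqueness is not needed for the argument — existence suffices). The case $i=2$ follows by interchanging the subscripts $1$ and $2$ throughout, which is legitimate because both hypotheses $D_1\cap U_2=\{0\}$ and $D_2\cap U_1=\{0\}$ are symmetric in $1$ and $2$.
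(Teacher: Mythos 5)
Your proof is correct and follows essentially the same two-inclusion argument as the paper: write $v=d_1+d_2$, note $d_2=v-d_1\in D_2\cap U_1=\{0\}$, and observe the opposite inclusion is immediate, with the case $i=2$ by symmetry. There is no substantive difference in approach.
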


\begin{proof}
We prove that $(D_1+D_2)\cap U_i=D_i$ with $i=1$ (the case $i=2$ is proved similarly). To see the inclusion $(D_1+D_2)\cap U_1\subset D_1$, let $t=d_1+d_2=u_1$ with $d_j\in D_j$ for $j=1,2$ and $u_1\in U_1$. Then $d_2=u_1-d_1\in D_2\cap U_1=\{0\}$, so $t=d_1\in D_1$. The opposite inclusion is clear.
\end{proof}
\noindent Let us call a sequence $(\psi^{(j)})_{j\in[\ell]}$ as in Definition \ref{def:seqCS} a \emph{witness} sequence for the sequential CS-complexity of $\Psi$ being at most $(k,\ell)$  at $i\in [r]$; that is, we have $\psi^{(\ell)}=\psi_i$ and for every $j\in [\ell]$ the set $\Psi\setminus\{\psi^{(1)},\ldots,\psi^{(j)}\}$ can be covered by $k+1$ subsets such that $\{\psi^{(1)},\ldots,\psi^{(j)}\}$ is included in the complement of the linear span of each of these subsets.

\begin{proof}[Proof of Theorem \ref{thm:main}]
We fix any value of $k$ and argue by induction on $\ell$ for this value of $k$. The base case $\ell=1$ is given by Theorem \ref{thm:basicGT}, since then $s_{\textup{CS}(i)}(\Psi)\leq k$.

By relabeling the forms in $\Psi$ if necessary, we may assume that $\psi_\ell=\psi_i$ and that the forms $\psi_1,\psi_2,\ldots,\psi_\ell$ form a witness sequence for the sequential Cauchy-Schwarz complexity of $\Psi$ being at most $(k,\ell)$ at $\ell$. Having relabeled the forms this way, our aim is to prove that
\begin{equation}\label{eq:proof-cs-seq-1}
\big| \mb{E}_{x_1,\ldots,x_d\in \mb{F}_p^n} f_1\big(\psi_1(x_1,\ldots,x_d)\big)\cdots f_r\big(\psi_r(x_1,\ldots,x_d)\big) \big| \leq \|f_\ell\|_{U^{k+1}}^{1/2^{\ell-1}},
\end{equation}
where $d$ is the number of variables of $\Psi$. Let us view the system $\Psi$ as a subset\footnote{A linear form $\psi$ is a linear map $\mb{F}_p^d\to \mb{F}_p$, $(z_1,\ldots,z_d)\mapsto a_1z_1+\cdots+a_dz_d$, so we may identify $\psi$ with the element $(a_1,\ldots,a_d)\in \mb{F}_p^d$. Thus a collection of linear forms can be identified with a subset of $\mb{F}_p^d$.} of $\mb{F}_p^d$.

Let $T\in \mb{F}_p^{d\times d}$ be an invertible matrix such that $\psi_1 T=(1,0^{d-1})$ (viewing $\psi_1$ as a horizontal vector), where   in general $0^m$ denotes the 0 vector in $\mb{F}_p^m$. The homomorphism $(\mb{F}_p^n)^d\to (\mb{F}_p^n)^d$, $(x_1,\ldots,x_d)\mapsto y=T(x_1,\ldots,x_d)$, where $y_i=T_{i1}x_1+\cdots +T_{id}x_d$ for $i\in [d]$, is invertible (with inverse $(y_1,\ldots,y_d)\mapsto T^{-1}(y_1,\ldots,y_d)$ for the inverse matrix $T^{-1}\in \mb{F}_p^{d\times d}$). Hence the average in \eqref{eq:proof-cs-seq-1} equals
\begin{equation}\label{eq:proof-cs-seq-2}
\mb{E}_{x_1,\ldots,x_d\in \mb{F}_p^n} f_1\big((\psi_1T)(x_1,\ldots,x_d)\big)\cdots f_r\big((\psi_rT)(x_1,\ldots,x_d)\big).
\end{equation}
Let us relabel the elements $\psi_j T$ as $\psi_j$ for all $j\in [r]$, and note that $\psi_1,\ldots,\psi_\ell$ is then still a witness sequence for the sequential CS-complexity being at most $(k,\ell)$ at $\ell$.

Since $\psi_1$ is now $(1,0^{d-1})$, the average in \eqref{eq:proof-cs-seq-2} can now be written
\begin{equation}\label{eq:proof-cs-seq-3}
\mb{E}_{x_1\in \mb{F}_p^n}\,f_1(x_1)\; \mb{E}_{x_2,\ldots,x_d\in \mb{F}_p^n}\,f_2(\psi_2(x_1,\ldots,x_d))\cdots f_r(\psi_r(x_1,\ldots,x_d)).
\end{equation}
Applying the Cauchy-Schwarz inequality to the average over $x_1$, we obtain the following upper bound for the squared modulus of \eqref{eq:proof-cs-seq-3}:
\begin{equation}\label{eq:proof-cs-seq-4}
    \mb{E}_{x_1,x_2,\ldots,x_d,x'_2,\ldots,x'_d\in \mb{F}_p^n}\prod_{j\in [2,r]} f_j(\psi_j(x_1,x_2\ldots,x_d)) \prod_{j\in [2,r]} \overline{f_j(\psi_j(x_1,x'_2\ldots,x'_d))}.
\end{equation}
We shall now define for every $j\in [2r-2]$ an element $\psi_j'\in \mb{F}_p^{2d-1}$ and a function $g_j:\mb{F}_p^n\to \mb{C}$ so as to rewrite \eqref{eq:proof-cs-seq-4} in the following form:
\begin{equation}\label{eq:proof-cs-seq-5}
    \mb{E}_{x_1,\ldots,x_{2d-1}\in \mb{F}_p^n}\prod_{j\in [2r-2]} g_j\big(\psi_j'(x_1,\ldots,x_{2d-1})\big).
\end{equation}
Letting $\psi_{j,t}$ denote the $t$-th entry of $\psi_j\in \mb{F}_p^d$, the required definitions are the following:
\[
\psi_j':=\begin{cases}
(\psi_{j+1,1},\ldots, \psi_{j+1,d},0^{d-1})\in \mb{F}_p^{2d-1}, &j\in [r-1],\\
(\psi_{j-r+2,1},0^{d-1},\psi_{j-r+2,2},\ldots, \psi_{j-r+2,d})\in \mb{F}_p^{2d-1}, &j\in [r,2r-2],
\end{cases}
\]
and
\[
g_j:=\begin{cases}
f_{j+1}, &j\in [r-1],\\
\overline{f_{j-r+2}}, & j\in [r,2r-2]
\end{cases}.
\]
Letting $\Psi'$ denote the system $\Psi'=\{\psi'_j:j\in [2r-2]\}$, we shall now prove that the sequence $\psi_1',\ldots,\psi_{\ell-1}'$ is a witness for the sequential CS-complexity of $\Psi'$ being at most $(k,\ell-1)$ at $\ell-1$. By induction on $\ell$, this will imply that \eqref{eq:proof-cs-seq-5} has modulus at most $\|g_{\ell-1}\|_{U^{k+1}}^{1/2^{\ell-2}}= \|f_\ell\|_{U^{k+1}}^{1/2^{\ell-2}}$, so \eqref{eq:proof-cs-seq-3} has modulus at most $\|f_\ell\|_{U^{k+1}}^{1/2^{\ell-1}}$, which will complete the proof. 

Fix any $j^*\in[\ell-1]$. Consider the following subspaces of $\mb{F}_p^{2d-1}$:
\[
U_1:=\{y\in \mb{F}_p^{2d-1}: y_{d+1}=\cdots=y_{2d-1}=0\},\quad  U_2:=\{y\in \mb{F}_p^{2d-1}: y_2 = \cdots =y_d=0\}.
\]
Note that $\Psi'\subset U_1\cup U_2$ and that $U_1\cap U_2 =\langle (1,0^{2d-2})\rangle$. Note also that no element $\psi_j'$ is in $U_1\cap U_2$, for otherwise the corresponding form $\psi_{s}\subset \mb{F}_p^d$ (identifying $\mb{F}_p^d$ with either $U_1$ or $U_2$ depending on whether $j$ is at most $r-1$ or greater) would satisfy $\langle \psi_s\rangle \ni\psi_1=(1,0^{d-1})$, contradicting the assumption that $\Psi\setminus \{\psi_1\}$ is covered by subsets that do not contain $\psi_1$ in their linear span. Letting $\vartheta$ denote the isomorphism $U_1\to \mb{F}_p^d$, $y\mapsto (y_1,\ldots,y_d)$, we have $\vartheta(U_1\cap \Psi')=\Psi\setminus \{\psi_1\}$. By our assumptions, the set $\Psi\setminus \{\psi_1,\ldots,\psi_{j^*+1}\}$ can be covered by subsets  $C_{1},\ldots,C_{k+1}$ such that $\{\psi_1,\ldots,\psi_{j^*+1}\}\subset \mb{F}_p^d\setminus \langle C_{t}\rangle$ for every $t\in [k+1]$. Furthermore, letting $C_{t}'=\vartheta^{-1}(C_{t})$, we have $(\Psi'\setminus \{ \psi_1',\ldots,\psi_{j^*}' \})\cap U_1\subset \bigcup_{t\in [k+1]} C'_{t}$ and $\langle C'_{t}\rangle \cap \langle (1,0^{2d-2})\rangle =\{0\}$ for every $t\in [k+1]$. This last equality follows from the fact that if $\langle C'_{t}\rangle$ contained $(1,0^{2d-2})$ for some $t\in [k+1]$, then $\langle \vartheta(C'_{t})\rangle=\langle C_{t}\rangle$ would contain $\vartheta((1,0^{2d-2}))=(1,0^{d-1})=\psi_1$, a contradiction. Similarly, using an isomorphism $U_2\to \mb{F}_p^d$ mapping $U_2\cap \Psi'$ onto $\Psi\setminus \{\psi_1\}$, we obtain a covering of $U_2\cap \Psi'$ by subsets $E'_{1},\ldots,E'_{k+1}$ such that $\langle E'_{t}\rangle \cap \langle (1,0^{2d-2})\rangle =\{0\}$ for all $t\in [k+1]$. Now for each $t\in [k+1]$ we let $D_{1,t}=\langle C'_{t}\rangle\subset U_1$ and $D_{2,t}=\langle E'_{t}\rangle\subset U_2$, and define the sets $S_t=C'_{t} \cup E'_{t}$. By construction, these sets $S_t$ cover $\Psi'\setminus\{\psi'_1,\ldots,\psi'_{j^*}\}$, and by Proposition \ref{prop:basic-lin-algebra}, each subspace $\langle S_t\rangle = D_{1,t} + D_{2,t}$ satisfies $\langle S_t\rangle \cap U_i = D_{i,t}$ for $i=1,2$, so it avoids the set $\{\psi'_1,\ldots,\psi'_{j^*}\}$.
\end{proof}

\section{Applications to translation-invariant systems}\label{sec:transinvapp}

\noindent It follows clearly from Definition \ref{def:transinv} that a system $\Psi\in \mb{F}_p^{r\times d}$ is translation invariant if and only if there is an invertible matrix $R\in \mb{F}_p^{d\times d}$ such that the matrix $\Psi':=\Psi R$ has first column equal to the vector $1^r:=(1,\ldots,1)\in \mb{F}_p^r$. Note also that changing $\Psi$ to $\Psi'$ does not affect the image (i.e.\ $\tIm(\Psi')=\tIm(\Psi)$), whence $\Lambda_\Psi(f_1,\ldots,f_r)=\Lambda_{\Psi'}(f_1,\ldots,f_r)$. We shall therefore assume from now on that for every translation invariant system the corresponding matrix $\Psi$ is given with first column equal to $1^r$.
 \begin{defn}\label{def:point-set}
Given a translation invariant system $\Psi\in\mb{F}_p^{r\times (M+1)}$ (with first column $1^r$), we define the \emph{associated set} of $\Psi$ to be 
\[
Z=\big\{a_i:= \psi_i|_{[2,M+1]}\in \mb{F}_p^M\;|\; i\in [r]\big\}.
\]
\end{defn}

Recall that an \emph{affine combination} of a set $Z\subset \mb{F}_p^M$ is a linear combination $\sum_{a\in Z}\lambda_a a$ such that $\sum_{a\in Z}\lambda_a =1$. Given a subset $X$ of a vector space, we denote by $\aff(X)$ the affine span of $X$, i.e.\ the minimal affine subspace that includes $X$.

For translation-invariant systems, the definition of Cauchy-Schwarz complexity can be rephrased as a geometric property of the associated sets. 
\begin{lemma}\label{lem:CSinvgeom}
Let $\Psi\in \mb{F}_p^{r\times (M+1)}$ be translation invariant and let $Z=\{a_1,\ldots,a_r\}\subset\mb{F}_p^M$ be the associated set. Then $s_{\textup{CS}(i)}(\Psi)\leq k$ if and only if $Z\setminus\{a_i\}$ can be covered by $k+1$ affine subspaces, each of which excludes $a_i$.
\end{lemma}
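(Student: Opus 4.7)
The plan is to use the translation-invariance normalization (first column $=1^r$) to turn the statement about $\mb{F}_p$-linear spans of the forms $\psi_j \in \mb{F}_p^{M+1}$ into a statement about affine spans of the associated points $a_j \in \mb{F}_p^M$, and then to verify that the two covering conditions match up under this correspondence. No deep ingredients are needed; the content is essentially a dictionary between linear dependencies among the $\psi_j$'s and affine dependencies among the $a_j$'s.

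The key computation I would carry out first is the following. For any subset $C\subset \Psi$, a general element of $\langle C\rangle$ has the form $\sum_{\psi_j\in C}\lambda_j\,\psi_j = \big(\sum_j \lambda_j,\; \sum_j \lambda_j a_j\big) \in \mb{F}_p \times \mb{F}_p^M$. This equals $\psi_i=(1,a_i)$ if and only if $\sum_j\lambda_j=1$ and $\sum_j\lambda_j\,a_j=a_i$, i.e.\ if and only if $a_i$ is an affine combination of $\{a_j:\psi_j\in C\}$. Since over the field $\mb{F}_p$ the affine span of a set is exactly the set of its affine combinations, this gives the crucial equivalence
\[
\psi_i\in \langle C\rangle \iff a_i\in \aff\big(\{a_j:\psi_j\in C\}\big).
\]

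Given this, the forward direction is immediate. Suppose $s_{\textup{CS}(i)}(\Psi)\leq k$, and fix a covering $\Psi\setminus\{\psi_i\}\subset C_1\cup\cdots\cup C_{k+1}$ with $\psi_i\notin \langle C_t\rangle$ for each $t$. Setting $A_t:=\{a_j:\psi_j\in C_t\}$, the sets $A_t$ cover $Z\setminus\{a_i\}$, and by the equivalence above the affine subspaces $V_t:=\aff(A_t)$ all avoid $a_i$.

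For the converse, assume $Z\setminus\{a_i\}\subset V_1\cup\cdots\cup V_{k+1}$ with each $V_t$ an affine subspace not containing $a_i$. Define $C_t:=\{\psi_j:a_j\in V_t\}\subset \Psi$. Since $a_i\notin V_t$, we have $\psi_i\notin C_t$, and clearly $\Psi\setminus\{\psi_i\}\subset \bigcup_t C_t$. Moreover $\{a_j:\psi_j\in C_t\}\subset V_t$ implies $\aff(\{a_j:\psi_j\in C_t\})\subset V_t$, so $a_i\notin \aff(\{a_j:\psi_j\in C_t\})$, which by the equivalence yields $\psi_i\notin \langle C_t\rangle$. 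Hence $s_{\textup{CS}(i)}(\Psi)\leq k$.

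There is really no main obstacle here: the proof is a translation exercise. The only point deserving a line of justification is the identification of $\aff(X)\subset\mb{F}_p^M$ with the set of affine combinations of $X$, which is standard over a field; everything else is bookkeeping about which indices lie in which cover element.
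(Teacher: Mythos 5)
Your proof is correct and follows essentially the same route as the paper's: you establish the dictionary $\psi_i\in\langle C\rangle \iff a_i\in\aff(\{a_j:\psi_j\in C\})$ via the normalization of the first column to $1^r$, and then both directions of the equivalence are the same bookkeeping that the paper carries out.
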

\begin{proof}
Note that for any set $C\subset [r]$, a form $\psi_i$ in $\Psi$ is in the linear span of $\{\psi_j:j\in C\}$ if and only if $a_i$ can be written as a linear combination $\sum_{j\in C}\lambda_j a_j$ such that $\sum_{j\in C}\lambda_j=1$ (the latter condition following from the first column of $\Psi$ being $1^r$ and $\sum_{j\in C}\lambda_j \psi_j=\psi_i$). Thus
\begin{equation}\label{eq:spanaff}
\psi_i\in \langle \{\psi_j:j\in C\}\rangle \;\Leftrightarrow\;a_i\in \aff(\{a_j: j\in C\}).
\end{equation}
If $s_{\textup{CS}(i)}(\Psi)\leq k$, then by definition we can cover $\Psi\setminus\{\psi_i\}$ by subsets $C_1,\ldots,C_{k+1}$ such that for every $t\in [k+1]$ we have $\psi_i\not\in \langle \{\psi_j:j\in C_t\}\rangle $. By \eqref{eq:spanaff} this implies that the affine subspace $H_t:=\aff (\{a_j: j\in C_t\})$ excludes $a_i$ for each $t\in [k+1]$, and we have $Z \setminus\{a_i\}\subset H_1\cup \cdots\cup H_{k+1}$.

Conversely, if $Z\setminus\{a_i\}$ is covered by affine subspaces $H_1,\ldots,H_{k+1}$ each excluding $a_i$, then letting $C_t:=\{j\in [r]: a_j\in H_t\cap (Z\setminus\{a_i\})\}$ for each $t\in [k+1]$, we have that $Z\setminus\{a_i\}$ is still covered by the affine spaces $\aff(\{a_j:j\in C_t\})$, $t\in [k+1]$, and then by \eqref{eq:spanaff} the sets $\{\psi_j:j\in C_t\}$ show that $s_{\textup{CS}(i)}(\Psi)\leq k$.
\end{proof}
\noindent Similarly, Lemma \ref{lem:seqCSinvgeom} below describes sequential CS-complexity for translation invariant systems as a geometric property of the associated sets, using the following terminology.

\begin{defn}[$k$-coverings excluding points]
Let $T\subset \mb{F}_p^n$ and let $a^{(1)},\ldots,a^{(\ell)}\in \mb{F}_p^n\setminus T$. We say that $T$ is \emph{$k$-coverable excluding} $a^{(1)},\ldots,a^{(\ell)}$ if there exist $k$ affine subspaces (not necessarily different) $V_1,\ldots,V_k\subset \mb{F}_p^n$ such that $T\subset \bigcup_{i=1}^k V_i$ and $\{a^{(1)},\ldots,a^{(\ell)}\}\subset \mb{F}_p^n\setminus (\bigcup_{i=1}^k V_i)$.
\end{defn}

\begin{lemma}\label{lem:seqCSinvgeom}
Let $\Psi\in \mb{F}_p^{r\times (M+1)}$ be a translation invariant system, with associated set $Z=\{a_1,\ldots,a_r\}\subset \mb{F}_p^M$. Then $\Psi$ has sequential CS-complexity at most $(k,\ell)$ at $i\in [r]$ if and only if there is a sequence $a^{(1)},\ldots,a^{(\ell)}\in Z$, with $a^{(\ell)}=a_i$, such that for every $j\in [\ell]$ the set $Z\setminus\{a^{(1)},\ldots,a^{(j)}\}$ is $(k+1)$-coverable excluding $a^{(1)},\ldots,a^{(j)}$.
\end{lemma}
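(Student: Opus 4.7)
The plan is to reduce the lemma to a level-by-level application of the argument of Lemma \ref{lem:CSinvgeom}. The crucial ingredient from that proof is the equivalence
\[
\psi_s \in \langle \{\psi_j : j \in C\}\rangle \;\Longleftrightarrow\; a_s \in \aff(\{a_j : j \in C\}),
\]
valid for any $C \subset [r]$ and $s \in [r]$, which converts conditions on linear spans of forms into conditions on affine spans of the associated points. Because $\Psi$ has distinct rows (and first column $1^r$), the assignment $s \mapsto a_s$ is a bijection $[r] \to Z$, so subsets of $\Psi$ correspond to subsets of $Z$ in a natural way, and I will use this correspondence at every level of the witness sequence.

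For the forward direction, I would start from a witness sequence $(\psi^{(j)} = \psi_{i_j})_{j \in [\ell]}$ with $\psi^{(\ell)} = \psi_i$ and subsets $C_{j,1}, \ldots, C_{j,k+1} \subset [r]$ as in Definition \ref{def:seqCS}. Setting $a^{(j)} := a_{i_j}$ (so $a^{(\ell)} = a_i$), I would define affine subspaces $V_{j,t} := \aff\big(\{a_s : s \in C_{j,t}\}\big)$. The covering $\Psi \setminus \{\psi^{(1)}, \ldots, \psi^{(j)}\} \subset \bigcup_t C_{j,t}$ then translates directly into $Z \setminus \{a^{(1)}, \ldots, a^{(j)}\} \subset \bigcup_t V_{j,t}$, and the condition $\{\psi^{(1)}, \ldots, \psi^{(j)}\} \subset \langle C_{j,t}\rangle^c$ becomes, via the equivalence above, $\{a^{(1)}, \ldots, a^{(j)}\} \cap V_{j,t} = \emptyset$ for every $t \in [k+1]$. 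This shows that $Z \setminus \{a^{(1)}, \ldots, a^{(j)}\}$ is $(k+1)$-coverable excluding $a^{(1)}, \ldots, a^{(j)}$.

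For the reverse direction, given a sequence $a^{(1)}, \ldots, a^{(\ell)} \in Z$ with $a^{(\ell)} = a_i$ and, for each $j$, affine subspaces $V_{j,1}, \ldots, V_{j,k+1}$ witnessing the $(k+1)$-coverability at step $j$, I would write $a^{(j)} = a_{i_j}$, set $\psi^{(j)} := \psi_{i_j}$, and define $C_{j,t} := \{s \in [r] : a_s \in V_{j,t}\} \setminus \{i_1, \ldots, i_j\}$. Then $\aff(\{a_s : s \in C_{j,t}\}) \subset V_{j,t}$, so no $a^{(j')}$ with $j' \leq j$ lies in this affine span, and the equivalence yields $\psi^{(j')} \notin \langle \{\psi_s : s \in C_{j,t}\}\rangle$ for every such $j'$. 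The covering $\Psi \setminus \{\psi^{(1)}, \ldots, \psi^{(j)}\} \subset \bigcup_t C_{j,t}$ follows immediately from the covering of $Z \setminus \{a^{(1)}, \ldots, a^{(j)}\}$ by the $V_{j,t}$.

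The proof is essentially bookkeeping, iterating the one-step correspondence of Lemma \ref{lem:CSinvgeom} across the $\ell$ levels of the sequence. I do not anticipate a substantive obstacle; the only care needed is in the reverse direction, to ensure that the index sets $C_{j,t}$ exclude $i_1, \ldots, i_j$ so that the exclusion conditions transfer cleanly at every level.
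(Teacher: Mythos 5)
Your proof is correct and matches the paper's approach: the paper gives no further detail than that the argument ``is similar to that of Lemma \ref{lem:CSinvgeom}, using \eqref{eq:spanaff}'', and your write-up is precisely the iterated, level-by-level application of that equivalence along the witness sequence. (One cosmetic remark: in the reverse direction the excision of $\{i_1,\ldots,i_j\}$ from $C_{j,t}$ is automatic, since the excluding condition $a^{(j')}\notin V_{j,t}$ for $j'\le j$ already forces $i_{j'}\notin\{s: a_s\in V_{j,t}\}$, so it need not be imposed by hand.)
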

\noindent We shall call a sequence $(a^{(j)})_{j\in [\ell]}$ in $Z$ with the property in this lemma a \emph{witness} sequence (for the sequential CS-complexity of $\Psi$ being at most $(k,\ell)$ at $i\in [r]$).

\begin{proof}
The proof is similar to that of Lemma \ref{lem:CSinvgeom}, using \eqref{eq:spanaff} (and the assumption that the first column of $\Psi$ is the vector $1^r$).
\end{proof}
\noindent Let us now focus on the systems $\Phi_{k,M}$ defined in  \eqref{eq:PhikMdef}.

First we prove the following lower bound on the true complexity of $\Phi_{k,M}$.
\begin{proposition}\label{prop:TrueCompSk}
Let $p$ be a prime, let $M\in \mb{N}$, and let $k\in [M(p-1)+1]$. There is a strictly increasing sequence of integers $(n_\ell)_{\ell\in \mb{N}}$ with the following property: for every $\ell$ there exist 1-bounded functions $f_z:\mb{F}_p^{n_\ell}\to \mb{C}$ for $z\in S_{k,M}$, such that
\begin{equation}\label{eq:av}
\Lambda_{\Phi_{k,M}}\big( (f_z)_{z\in S_{k,M}}\big) = 1
\end{equation}
and $\|f_{0^M}\|_{U^{k-2}} \to 0$ as $\ell\to \infty$. Hence the system $\Phi_{k,M}$ has true complexity at least $k-2$.
\end{proposition}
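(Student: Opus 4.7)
The plan is to exhibit, for each $\ell$, an explicit polynomial-phase construction $f_z(x) = e_p(\gamma_z P_\ell(x))$, $z \in S_{k, M}$, in which: (i) the scalars $\gamma_z \in \mb{F}_p$ are chosen so that $\Lambda_{\Phi_{k,M}} \equiv 1$ and $\gamma_{0^M} = 1$, and (ii) the polynomial $P_\ell$ of degree $k - 2$ is chosen on a sequence of spaces $\mb{F}_p^{n_\ell}$ of growing dimension so that $\|e_p(P_\ell)\|_{U^{k-2}} \to 0$. Since $f_{0^M} = e_p(P_\ell)$, this will give the desired sequence. The identity $\Lambda = 1$ will come from forcing $\sum_z \gamma_z P_\ell(\phi_z(x, t)) \equiv 0$ identically, which, upon expanding $P_\ell(x + z_1 t_1 + \cdots + z_M t_M)$ as a polynomial in $z$ of total degree at most $k - 2$, reduces to the purely combinatorial condition $\sum_{z \in S_{k,M}} \gamma_z z^\alpha = 0$ for every $\alpha \in \mb{Z}_{\geq 0}^M$ with $|\alpha|\leq k - 2$.

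For step (i), I would exploit the hypothesis $k \leq M(p-1)+1$ by choosing integers $r_1, \ldots, r_M \in [0, p-1]$ with $r_1 + \cdots + r_M = k - 1$ (possible since $k - 1 \leq M(p-1)$) and setting $\gamma_z := \prod_{i=1}^M (-1)^{z_i} \binom{r_i}{z_i}$ for $z \in \prod_i [0, r_i]$ and $\gamma_z := 0$ otherwise. The support lies in $S_{k,M}$ since $\sum r_i = k - 1 < k$ and each $r_i \leq p-1$; we have $\gamma_{0^M} = 1$; and $\sum_z \gamma_z z^\alpha$ factors over $i$ as a product of one-variable finite-difference sums $\sum_{j=0}^{r_i} (-1)^j \binom{r_i}{j} j^{\alpha_i}$, each of which vanishes whenever $\alpha_i < r_i$ by the classical vanishing of the $r$-th finite difference of a monomial of lower degree. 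When $|\alpha| \leq k - 2 = \sum r_i - 1$, pigeonhole forces some $\alpha_i < r_i$, so the whole product vanishes.

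For step (ii) I would take $n_\ell := \ell(k-2)$ and
\[
P_\ell(x) := \sum_{i=1}^\ell x_{(i-1)(k-2)+1}\, x_{(i-1)(k-2)+2} \cdots x_{i(k-2)},
\]
a sum of $\ell$ multilinear block-monomials of degree $k-2$ on disjoint blocks of variables. Multilinearity in each block makes the iterated derivative $\Delta_{h_1, \ldots, h_{k-2}} P_\ell$ independent of $x$ and split as $\sum_i \operatorname{perm}(H^{(i)})$, where $H^{(i)} \in \mb{F}_p^{(k-2)\times(k-2)}$ is the block-$i$ slice of the $h_j$'s; independence of the blocks then gives $\|e_p(P_\ell)\|_{U^{k-2}}^{2^{k-2}} = B^\ell$ for $B := \mb{E}_H e_p(\pm\operatorname{perm}(H))$. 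A symmetry argument ($H \mapsto \operatorname{diag}(-1,1,\ldots,1) H$) shows $B$ is real, and $B < 1$ strictly because $\operatorname{perm}$ is non-constant on $\mb{F}_p^{(k-2)^2}$. Working with block monomials rather than power sums such as $\sum_i x_i^{k-2}$ is essential when $k - 2 \geq p$, where $x^{k-2}$ would reduce via Fermat's little theorem to a polynomial of classical degree strictly less than $k-2$.

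The main subtlety I expect is making the construction work uniformly across the full range $k \leq M(p-1) + 1$: step (i) saturates the hypothesis on $k$, while step (ii) requires a polynomial that stays genuinely of degree $k-2$ even when $k$ exceeds $p$. With both ingredients in place, the conclusion $s(\Phi_{k, M}) \geq k - 2$ is immediate from Definition \ref{def:TC}: any witness $\varepsilon$ for true complexity at most $k - 3$ at $0^M$ would give $1 = |\Lambda_{\Phi_{k,M}}((f_z)_z)| \leq \varepsilon(\|f_{0^M}\|_{U^{k-2}}) \to 0$ as $\ell \to \infty$, a contradiction.
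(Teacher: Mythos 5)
Your proof is correct, and it follows the same overall template as the paper's argument — polynomial phase functions $f_z = e_p(\gamma_z P)$ with the same binomial weights $\gamma_z = \prod_i(-1)^{z_i}\binom{r_i}{z_i}$ on a box $\prod_i[0,r_i] \subset S_{k,M}$ (the paper calls the exponent vector $w$ rather than $r$, with the cosmetic adjustment of using $w_1-1$ in one slot), tensored over $\ell$ disjoint blocks to send the norm to zero — but it differs from the paper in the choice of the degree-$(k-2)$ polynomial $P$ and, consequently, in how the two key facts are verified. The paper works on $\mb{F}_p^M$ (so $n_\ell = \ell M$) and takes $P(x) = \binom{|x_1|_p}{w_1-1}\binom{|x_2|_p}{w_2}\cdots\binom{|x_M|_p}{w_M}$; the identity $\Lambda = 1$ is then deduced from nilspace theory (the map $P$ is a morphism $\mc{D}_1(\mb{Z}_p^M)\to\mc{D}_{k-2}(\mb{Z}_p)$, so the Gray-code alternating sum of $P$ on any $(k-1)$-cube vanishes), and the bound $\|e_p(P)\|_{U^{k-2}} < 1$ is obtained by exhibiting one choice of shifts $h_j$ along coordinate axes for which $\nabla_{h_1}\cdots\nabla_{h_{k-2}}P = 1$. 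Your construction instead decouples $P$ from the weights: you observe that the weights $\gamma_z$ kill \emph{every} polynomial of degree $\leq k-2$ by direct expansion of $P(x+z\cdot t)$ in $z$ and the classical finite-difference vanishing $\sum_{j=0}^{r_i}(-1)^j\binom{r_i}{j}j^{\alpha_i} = 0$ for $\alpha_i < r_i$ plus a pigeonhole, and then you are free to choose $P_\ell$ to be the block-multilinear polynomial on $\mb{F}_p^{\ell(k-2)}$, whose iterated derivative is the permanent of the matrix of shifts, making $\|e_p(P_\ell)\|_{U^{k-2}}^{2^{k-2}} = B^\ell$ with $B = \mb{E}_H e_p(\pm\operatorname{perm}(H))$ immediately computable and $<1$. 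Your route is more self-contained (it replaces the citations to the nilspace literature with the elementary difference-calculus identity and the permanent computation), and your remark about why block monomials rather than power sums are needed for $k-2 \geq p$ is exactly the point; the one small thing worth noting is that the reality of $B$ (and indeed $B\geq 0$) is automatic since $B$ is a $2^{k-2}$-th power of a Gowers norm, so the diagonal-symmetry argument, while correct, is redundant.
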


\begin{proof}
We take $n_\ell=\ell M$. First let us suppose that $\ell=1$. Fix any $w\in [0,p-1]^M$ such that $w_1+\cdots +w_M = k-1$ (thus $w\in S_{k,M}$), and suppose without loss of generality that $w_1>0$. Let $P$ denote the  map $\mb{F}_p^M\to\mb{F}_p$ defined by $P(x)=\binom{|x_1|_p}{w_1-1}\binom{|x_2|_p}{w_2}\cdots \binom{|x_M|_p}{w_M}$, where $|x_i|_p\in[0,p-1]$ is the representative in $[0,p-1]$ of $x_i\in \mb{F}_p$. We have that $P$ is a polynomial map of degree $k-2$ (see e.g.\ \cite[Lemma 1.6 $(iii)$]{T&Z-Low}).

For any $x,t_1,\ldots,t_M\in \mb{F}_p^M$ let $\q=\q_{x,t_1,\ldots,t_M}$ denote the $\mb{F}_p^M$-valued cube of dimension $k-1$, defined for $v=(v_1,\ldots,v_{k-1})\in \{0,1\}^{k-1}$ by
\[
\q(v_1,\ldots,v_{k-1}):= x+(v_1+\cdots +v_{w_1})t_1+(v_{w_1+1}+\cdots +v_{w_1+w_2})t_2+\cdots+(v_{k-w_M}+\cdots +v_{k-1})t_M.
\]
For any such cube $\q$, letting $|z|:=z_1+\cdots+z_M$ for $z\in [0,p-1]^M$,  the Gray-code alternating sum $\sigma_{k-1}(P\co \q):=\sum_{v\in \{0,1\}^{k-1}} (-1)^{v_1+\cdots+v_{k-1}}P\co\q(v)$ satisfies
\[
\sigma_{k-1}(P\co \q) = \sum_{z\in \prod_{i=1}^M[0,w_i]} (-1)^{|z|}\binom{w_1}{z_1}\cdots \binom{w_M}{z_M}P(x+z_1 t_1+\cdots +z_M t_M).
\]
By \cite[Theorem 2.2.14]{Cand:Notes1}, since $P$ is a polynomial of degree at most $k-2$, it is a morphism\footnote{The term ``morphism" here is used in the category of nilspaces, see \cite[Definition 2.2.11]{Cand:Notes1}.} from $\mc{D}_1(\mb{Z}_p^M)$ to $\mc{D}_{k-2}(\mb{Z}_p)$,\footnote{See \cite[Definition 2.2.30]{Cand:Notes1} for the definition of the nilspace $\mc{D}_i(Z)$ associated with an abelian group $Z$, and \cite[Theorem 2.2.14]{Cand:Notes1} for an equivalent representation of morphisms between such nilspaces.} and therefore $P\co \q$ is a $(k-1)$-cube on $\mb{Z}_p$, so $\sigma_{k-1}(P\co \q)=0$ by \cite[Proposition 2.2.28]{Cand:Notes1}.

Now, for each $z\in S_{k,M}$, we define $f_z: \mb{F}_p^{n_1}\to \mb{C}$, $y\mapsto e_p((-1)^{|z|}\binom{w_1}{z_1}\cdots \binom{w_M}{z_M}P(y))$ where $e_p(t)=e^{2 \pi i t/p}$ for $t\in \mb{F}_p$. Then
\begin{eqnarray*}
\Lambda_{\Phi_{k,M}}\big( (f_z)_{z\in S_{k,M}}\big) & = & \mb{E}_{x,t_1,\ldots,t_M\in \mb{F}_p^M}\prod_{z\in S_{k,M}} f_z(x+z_1t_1+\cdots+z_Mt_M) \\
& = & \mb{E}_{x,t_1,\ldots,t_M\in \mb{F}_p^M}\; e_p\big(\sigma_{k-1}(P\co \q_{x,t_1,\ldots,t_M})\big) = 1.
\end{eqnarray*}
We claim that $\|f_{0^M}\|_{U^{k-2}(\mb{F}_p^M)}< 1$. To see this recall that, letting $\nabla_h$ denote the difference operator defined by $\nabla_h P(x)=P(x+h)-P(x)$, we have
\[
\|f_{0^M}\|_{U^{k-2}}^{2^{k-2}}=\mb{E}_{x,h_1,\ldots,h_{k-2}\in \mb{F}_p^M} \; e_p(\nabla_{h_1}\cdots \nabla_{h_{k-2}}P(x)).
\]
This is an average of complex numbers of modulus 1 which include the number 1 (indeed any term in the average is 1 if it involves some $h_i=0$). Therefore, the claim will follow if we show that the average also includes numbers different from 1. For $h_1 = \cdots = h_{w_1-1} = (1,0^{M-1})$, $h_{w_1}=\cdots = h_{w_1+w_2-1} = (0,1,0^{M-2})$, $\ldots$, $h_{k-w_M-1}=\cdots=h_{k-2}= (0^{M-1},1)$, the corresponding term  $e_p\big(\nabla_{h_1}\cdots \nabla_{h_{k-1}}P(x)\big)$ equals\footnote{Note that in general $\nabla_1 \binom{x}{k}=\binom{x+1}{k}-\binom{x}{k}=\binom{x}{k-1}$ and so $\nabla_1^k \binom{x}{k}=1$.}
\[
e_p\Big(\big(\nabla_1^{w_1-1}\binom{|x_1|_p}{w_1-1}\big)\big(\nabla_1^{w_2}\binom{|x_2|_p}{w_2}\big)\cdots \big(\nabla_1^{w_M}\binom{|x_M|_p}{w_M}\big)\Big)= e_p(1)\neq 1,
\]
and our claim follows.

For general $\ell>1$, let us define a collection of functions $(f'_z:\mb{F}_p^{n_\ell}\to\mb{C})_{z\in S_{k,M}}$ by taking products of the above functions $f_z$. More precisely, for any element $y\in \mb{F}_p^{n_\ell}=\mb{F}_p^{\ell M}$,  denoting this by $(y^{(1)},\ldots,y^{(\ell)})$ where $y^{(j)}\in \mb{F}_p^M$,  we define the function $f'_z$ by $f'_z(y^{(1)},\ldots,y^{(\ell)}):= f_z(y^{(1)})\cdots f_z(y^{(\ell)})$. We then have $\mb{E}_{x,t_1,\ldots,t_{M}\in \mb{F}_p^{n_\ell}}\prod_{z\in S_{k,M}} f'_z(x+z\cdot t) = 1$ and $\|f'_z\|_{U^{k-2}(\mb{F}_p^{\ell M})} = \|f_z\|_{U^{k-2}(\mb{F}_p^M)}^\ell$, so $\|f'_{0^M}\|_{U^{k-2}(\mb{F}_p^{\ell M})}\to 0$ as $\ell\to \infty$.
\end{proof}

\begin{remark}\label{rem:highchar}
For $k\leq p$ the Cauchy-Schwarz complexity of $\Phi_{k,M}$ at $0^M$ is  $k-2$, because we can use the $k-1$ non-zero hyperplanes (that is,  hyperplanes not containing $0^M$) $H_j=\{z\in\mb{F}_p^M: z_1+\cdots+z_M=j\!\!\mod p\}$, $j\in [k-1]$ to cover $S_{k,M}\setminus\{0^M\}$. However, this simple argument breaks down for $k> p$, because the hyperplane $H_p$ contains $0^M$ so we cannot use it as part of a suitable cover of $S_{p+1,M}\setminus\{0^M\}$. One might believe at first that $S_{p+1,M}\setminus\{0^M\}$ could be covered some other way by $p$ non-zero hyperplanes. For instance, by inspection it can be seen that for $p=3$ and $M=2$ the set $S_{4,2}\setminus\{0^2\}\subset \mb{F}_3^2$ can be covered by $3$ non-zero hyperplanes (lines not containing $0^2$). However, as the next result shows, for $p\geq 5$ this is no longer possible. This will show that the true complexity of $\Phi_{k,M}$ can be strictly smaller than its Cauchy-Schwarz complexity when $k> p$, which motivates refining Cauchy-Schwarz complexity in order to prove Theorem \ref{thm:GGVN}.
\end{remark}

\begin{proposition}\label{prop:BallSerraApp}
For $p\geq 5$, the number of non-zero lines in $\mb{F}_p^2$ needed to cover the set $S_{p+1,2}\setminus\{0^2\}$ is at least $p+1$.
\end{proposition}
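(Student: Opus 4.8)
\textit{The plan is to translate the covering problem into a statement about vanishing of a polynomial, obtain the bound $p$ from the Combinatorial Nullstellensatz / Ball--Serra, and then rule out the extremal value $p$ by a structural argument that uses $p\ge 5$ in an essential way.}

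Suppose $\ell_1,\dots,\ell_N$ are non-zero lines covering $S_{p+1,2}\setminus\{0^2\}$; write $\ell_j=\{L_j=0\}$ with $L_j$ an affine linear form satisfying $L_j(0^2)\ne 0$, and put $f:=\prod_{j=1}^N L_j$, so that $\deg f=N$, $f(0^2)\ne 0$, and $f$ vanishes on $S_{p+1,2}\setminus\{0^2\}$. The goal is $N\ge p+1$. First I would prove $N\ge p$. Viewed in $\mathbb{F}_p^2$, the set $S_{p+1,2}$ is all of $\mathbb{F}_p^2$ except the ``corner'' $\{z:z_1+z_2\ge p+1\}$, and this corner lies on the lines $\{x+y\equiv c\}$ with $c\in\{1,\dots,p-2\}$; moreover \emph{every} point with $x+y\equiv 0$ or $x+y\equiv p-1$ other than $0^2$ lies in $S_{p+1,2}\setminus\{0^2\}$. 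Hence, multiplying $f$ by $h:=\prod_{c=1}^{p-2}(x+y-c)$ and checking the three residue classes $x+y\equiv 0,\ x+y\equiv p-1,\ x+y\equiv c\in\{1,\dots,p-2\}$, one finds that $g:=fh$ vanishes on all of $\mathbb{F}_p^2$ except at $0^2$, where $g(0^2)\ne 0$. Applying the Combinatorial Nullstellensatz (equivalently Ball--Serra with $A_1=A_2=\mathbb{F}_p$, $B_1=B_2=\{0\}$) gives $\deg g\ge 2(p-1)$, i.e.\ $N+p-2\ge 2p-2$, so $N\ge p$.

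The crux is to exclude $N=p$ when $p\ge 5$. Restricting $f$ to the (forbidden) line $\{x+y=0\}$, parametrised by $(t,-t)$, the set $D:=\{x+y=p\}\cap S_{p+1,2}=\{(j,p-j):1\le j\le p-1\}$ consists of $p-1$ points where $f$ vanishes, while $f(0,0)\ne 0$; since on this restriction every line $\ell_j$ parallel to the pencil $\{x+y=\text{const}\}$ contributes a nonzero constant, $f(t,-t)$ is a nonzero polynomial of degree equal to the number of lines $\ell_j$ \emph{transverse} to that pencil, and it has $\ge p-1$ roots, so at least $p-1$ of the $\ell_j$ are transverse. With $N=p$ this leaves at most one $\ell_j$ parallel to the pencil, and the $p-1$ transverse lines must meet $D$ bijectively. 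Feeding this into the extremality of $g$ (equivalently, into the structure of minimal covers of $\mathbb{F}_p^2\setminus\{0^2\}$ by $2p-2$ lines: one pencil is forced to be $\{x+y=\text{const}\}$), one is reduced to a configuration in which some line $\lambda$ through $0^2$ has the property that each non-zero covering line meets $\lambda\cap(S_{p+1,2}\setminus\{0^2\})$ in at most one point while $\lambda$ cannot be cleared by the pencil lines — contradicting the elementary fact that, for $p\ge 5$, \emph{every} line through $0^2$ meets $S_{p+1,2}\setminus\{0^2\}$ in at least two points. This last fact follows by noting that a line through $0^2$ of ``slope'' $r$ meets $S_{p+1,2}\setminus\{0^2\}$ in exactly $\#\{c\in[1,p-1]:\{cr\}_p\le c\}$ points, that $c=p-1$ always qualifies, and that $\sum_{c=1}^{p-1}(\{cr\}_p-c)=0$ forces a second $c$ to qualify once $p\ge 5$.

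I expect the elimination of the case $N=p$ to be the main obstacle: at every stage the relevant inequalities are ``feasible'' on cardinality grounds alone, so one genuinely has to exploit the incidence structure (extremal configurations of the Nullstellensatz, resp.\ of line covers of $\mathbb{F}_p^2\setminus\{0^2\}$), and the statement is in fact false for $p=3$, where three non-zero lines do cover $S_{4,2}\setminus\{0^2\}$ (cf.\ Remark~\ref{rem:highchar}) precisely because the line $\{x=y\}$ through $0^2$ then meets $S_{4,2}\setminus\{0^2\}$ in the single point $(1,1)$. So the argument must be arranged so that the hypothesis $p\ge 5$ enters exactly where the ``second point on $\lambda$'' is needed.
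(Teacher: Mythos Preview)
Your reduction to $N\ge p$ via the polynomial $g=f\cdot\prod_{c=1}^{p-2}(x+y-c)$ and the Combinatorial Nullstellensatz is correct and is a pleasant alternative to the paper's route. The paper does not argue in two stages; it assumes $N=p$ from the start and shows that one of the $p$ lines must then contain a point $z$ with $|z_1|_p+|z_2|_p=p+1$, after which the $p$ lines cover the full grid $([0,|z_1|_p]\times[0,|z_2|_p])\setminus\{0^2\}$, and the Ball--Serra bound for that grid gives $p\ge p+1$ directly.

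The genuine gap is in your exclusion of $N=p$. You appeal to ``the structure of minimal covers of $\mb{F}_p^2\setminus\{0^2\}$ by $2p-2$ lines'' and claim that ``one pencil is forced to be $\{x+y=\text{const}\}$'', but no such structure theorem is available: minimal covers need not decompose into two parallel pencils (already for $p=3$ the four lines $\{x=1\},\{y=1\},\{x+y=1\},\{x+y=2\}$ cover $\mb{F}_3^2\setminus\{0^2\}$ using three directions), so the extremality of $g$ gives you no usable rigidity. The configuration you then describe --- a line $\lambda$ through $0^2$ on which ``each non-zero covering line meets $\lambda\cap(S_{p+1,2}\setminus\{0^2\})$ in at most one point'' --- is not actually derived from anything, and in any case that incidence property holds trivially for \emph{every} line through the origin; so your elementary fact $|\lambda\cap(S_{p+1,2}\setminus\{0^2\})|\ge 2$ (which is indeed true for $p\ge 5$) cannot by itself produce a contradiction, since with $p$ covering lines available, clearing two points on $\lambda$ is no obstacle. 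What you are missing is precisely the paper's concrete incidence step: having established that at least $p-1$ of the $\ell_j$ are transverse to $\{x+y=\text{const}\}$ and hit the segment $D=\{(j,p-j):1\le j\le p-1\}$ one each, look at where these lines meet $\{x+y\equiv 1\}$. Either some line hits a point with $|z_1|_p+|z_2|_p=p+1$ (and the grid argument finishes), or every one of them passes through $e_1=(1,0)$ or $e_2=(0,1)$; pigeonhole then puts at least $(p-1)/2$ lines through (say) $e_1$, leaving at most $(p+1)/2$ lines to cover the $p-2$ remaining nonzero points on the $x$-axis, which is impossible for $p>5$, with $p=5$ handled by a short direct check.
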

\begin{proof}
Suppose for a contradiction that $L_1,\ldots,L_p$ are non-zero lines covering $S_{p+1,2}\setminus\{0^2\}$. 

We claim that $\bigcup_{i\in [p]} L_i$ must also contain a point $z\in \mb{F}_p^2$ with $|z_1|_p+|z_2|_p=p+1$. If this holds then $\bigcup_{i\in [p]} L_i$ covers the set $B=\big([0,|z_1|_p]\times [0,|z_2|_p]\big)\setminus\{0^2\} \subset (S_{p+1,2}\cup\{z\})\setminus\{0^2\}$. This then yields a contradiction since, by known results on  covering by hyperplanes, we need at least $p+1$ non-zero lines to cover $B$ (see for instance \cite[Theorem 5.3]{B&S} applied with $t=1$ and $D_1=D_2=\{0\}$; see also \cite{A&F}). Hence it suffices to prove the claim.

To prove the claim, assume for a contradiction that for every $i\in [p]$ the line $L_i$ contains no point $z\in \mb{F}_p^2$ with $|z_1|_p+|z_2|_p=p+1$. Note that each of the points on the line segment $\{z\neq 0^2: |z_1|_p+|z_2|_p=p\}$ must be contained in some line $L_i$ (since this segment is a subset of $S_{p+1,2}\setminus\{0^2\}$), and no line $L_i$ can contain more than one point on this segment (otherwise this line $L_i$ would be the subspace $\{z: z_1+z_2=0\!\!\mod p\}$, contradicting that $L_i$ is a non-zero line). Therefore, there are at least $p-1$ lines among the lines $L_i$ needed to cover this segment. Since each such line $L_i$ is not parallel to the segment, this line $L_i$ must meet the line $\{z: z_1+z_2=1\mod p\}$ at some point. But this point cannot be on the part $\{z: |z_1|_p+|z_2|_p=p+1\}$ of this line, by our initial assumption. Hence each of these lines $L_i$ must contain either the point $e_1=(1,0)$ or the point $e_2=(0,1)$. It follows that without loss of generality there are at least $(p-1)/2$ lines $L_i$ containing $e_1$. But then there remain only $(p+1)/2$ lines among the $L_i$ to cover the remaining $p-2$ points on the $x$-axis (other than $e_1$ and the origin). This yields a contradiction if $(p+1)/2<p-2$, i.e. if $p>5$. If $p=5$ then the only possibility is if, of the five lines $L_i$, at most two go through $e_1$ and at most two go through $e_2$ (indeed, if three lines go through $e_1$ then the other 2 lines cannot cover the remaining 3 points on the $x$-axis). But then there is a line $L_i$ containing neither $e_1$ nor $e_2$, and so,  in order for this line not to contain any $z$ with $|z_1|_p+|z_2|_p=p+1$, it must be parallel to the line $\{z: z_1+z_2 = 1\mod p\}$. But since there can be at most one such line $L_i$ among the five (since the other four lines must cover the segment $\{z: |z_1|_p+|z_2|_p=5\}$), we are now in a quite restricted situation where there is precisely one such line, say $L_1$, parallel to $\{z: |z_1|_p+|z_2|_p=5\}$, and then 2 lines, say $L_2,L_3$, through $e_1$, and the other two lines $L_4,L_5$ through $e_2$. In this case it is seen by inspection that the set $S_{5,2}\setminus\{0^2\}$ is not covered by the five lines $L_i$.
\end{proof}
\begin{corollary}\label{cor:CS>TC}
For $p\geq 5$ and $M\geq 2$ we have $s_{\textup{CS}}(\Phi_{p+1,M})\geq p$.
\end{corollary}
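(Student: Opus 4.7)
The plan is to reduce the problem to the two-dimensional case already handled by Proposition \ref{prop:BallSerraApp}. First, I would invoke Lemma \ref{lem:CSinvgeom} to translate the statement about CS-complexity into the geometric statement: it suffices to exhibit one index $i\in[r]$ for which the set $S_{p+1,M}\setminus\{a_i\}$ (with $Z=S_{p+1,M}$ the associated set of $\Phi_{p+1,M}$) cannot be covered by $p$ affine subspaces of $\mb{F}_p^M$ all excluding $a_i$. The natural choice is the form corresponding to $a_i=0^M\in S_{p+1,M}$.

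Next, assume for contradiction that $A_1,\ldots,A_p$ are affine subspaces of $\mb{F}_p^M$, none of them containing $0^M$, whose union covers $S_{p+1,M}\setminus\{0^M\}$. Consider the two-dimensional linear subspace $V:=\{(z_1,z_2,0^{M-2}):z_1,z_2\in\mb{F}_p\}$ together with the natural identification $V\cong\mb{F}_p^2$ sending $0^M$ to $0^2$. Since $S_{p+1,2}\times\{0^{M-2}\}\subset S_{p+1,M}$, the sets $A_j\cap V$ cover $(S_{p+1,2}\times\{0^{M-2}\})\setminus\{0^M\}$. Each $A_j\cap V$ is an affine subspace of $V$ avoiding $0^M$; transferring to $\mb{F}_p^2$, it is therefore either empty, a single point, or a line not passing through $0^2$ (it cannot be the whole $\mb{F}_p^2$, which contains the origin).

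The final step is to handle any singleton subspaces appearing in this cover. If $A_j\cap V$ corresponds to a point $q\neq 0^2$, I would replace it by any line of $\mb{F}_p^2$ through $q$ not containing $0^2$ (such a line exists, e.g.\ the line spanned by $q$, or any of the $p$ affine lines through $q$ that miss the origin). After these substitutions, we have produced at most $p$ non-zero lines of $\mb{F}_p^2$ that cover $S_{p+1,2}\setminus\{0^2\}$, contradicting Proposition \ref{prop:BallSerraApp}. This contradiction yields $s_{\textup{CS}(i)}(\Phi_{p+1,M})\geq p$ for the index $i$ corresponding to $0^M$, and hence $s_{\textup{CS}}(\Phi_{p+1,M})\geq p$.

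There is no serious obstacle here: all the heavy lifting has been done in Proposition \ref{prop:BallSerraApp}, and the only real point to be careful about is that the cover allows general affine subspaces (of arbitrary dimension), not just hyperplanes or lines. The restriction to the coordinate plane $V$ and the passage from singletons to lines are the two elementary moves that bridge the general-$M$ statement to the $M=2$ result.
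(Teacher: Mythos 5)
Your proof is correct and reaches the conclusion by a route slightly different from the paper's. The paper argues by induction on $M$, intersecting a putative cover of $S_{p+1,M}\setminus\{0^M\}$ by $p$ non-zero affine hyperplanes with the coordinate hyperplane $\{z_M=0\}$, and thereby descending one dimension at a time until reaching the base case $M=2$ (Proposition~\ref{prop:BallSerraApp}). You instead intersect directly with the coordinate plane $V=\{(z_1,z_2,0^{M-2})\}$, reducing to $M=2$ in a single step; in exchange you must explicitly handle the possible degenerate intersections $A_j\cap V$ (empty or a single point) by discarding or enlarging them to non-zero lines. The two approaches are logically equivalent, and both require some enlargement step (the paper passes tacitly from arbitrary affine subspaces, as allowed by Lemma~\ref{lem:CSinvgeom}, to hyperplanes; you pass from points to lines). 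One small slip: in your parenthetical aside you offer ``the line spanned by $q$'' as an example of a line through $q$ not containing $0^2$, but the linear span $\{tq:t\in\mb{F}_p\}$ of course does contain the origin. Your alternative justification --- that of the $p+1$ lines through $q$, only the one through the origin is excluded, leaving $p$ valid choices --- is the correct one, so the existence claim itself stands and the argument is unaffected.
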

\begin{proof}
We argue by induction on $M$. The case $M=2$ is given by Proposition \ref{prop:BallSerraApp}. For $M\geq 3$, if there is a covering of $S_{p+1,M}\setminus\{0^M\}$ by non-zero hyperplanes in $\mb{F}_p^M$, then none of the hyperplanes can be the subspace $\{z_M=0\}$. Thus, each of these hyperplanes has an intersection with this subspace which is either empty (which happens if the hyperplane is parallel to $\{z_M=0\}$) or is included in a non-zero hyperplane inside this subspace $\{z_M=0\}$. The union of these intersections must cover $(S_{p+1,M}\cap \{z_M=0\})\setminus \{0^M\}$, so by the case $M-1$ there must be at least $p+1$ such hyperplanes.
\end{proof}

\begin{remark}
The proof of Proposition \ref{prop:BallSerraApp} can be generalized without much difficulty to strengthen the result a bit further. For instance, it can be proved this way that for any prime $p>19$, at least $p+2$ non-zero lines are needed to cover the set $S_{p+2,2}\setminus\{0^2\}$ in $\mb{F}_p^2$, and so $s_{\textup{CS}}(\Phi_{p+2,M})\geq p+1$. We omit the details. It would be interesting to obtain lower bounds on $s_{\textup{CS}}(\Phi_{k,M})$ for large primes $p$ and higher values of $k>p$, as this problem is related to hyperplane-covering problems and may require interesting refinements of results such as those in \cite{A&F,B&S}.
\end{remark}

\noindent We now proceed to the main task of this section, namely proving Theorem \ref{thm:GGVN}, and especially deducing \eqref{eq:GGVN} from Theorem \ref{thm:main}. For this we shall use the following result, which gives us a sequence that goes through all of $S_{k,M}$ and from which we will be able to take initial segments to obtain adequate witness sequences for any $z\in S_{k,M}$.

\begin{proposition}\label{prop:covering-s-k}
Let $p$ be a prime and let $M$ and $k$ be positive integers. There exists a sequence $a^{(1)},\ldots,a^{(|S_{k,M}|)}$ such that $\{a^{(i)}\}_{i=1}^{|S_{k,M}|}=S_{k,M}$, with $a^{(|S_{k,M}|)}=0^M$, and such that for all $i\in [\,|S_{k,M}|\,]$ the set $S_{k,M}\setminus \{a^{(1)},\ldots,a^{(i)}\}$ is $(k-1)$-coverable excluding $\{a^{(1)},\ldots,a^{(i)}\}$.
\end{proposition}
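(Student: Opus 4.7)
My plan is to prove Proposition \ref{prop:covering-s-k} by induction on $M$, stratifying $S_{k,M}$ by the value of the last coordinate $z_M$ and invoking the inductive hypothesis within each horizontal slice. For each $c\in\{0,1,\ldots,c_0\}$ with $c_0:=\min(k-1,p-1)$, write $\Sigma_c:=\{(y,c):y\in S_{k-c,M-1}\}$, so that $S_{k,M}=\bigsqcup_{c=0}^{c_0}\Sigma_c$.

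The base case $M=1$ is easy: enumerate $S_{k,1}=\{0,1,\ldots,c_0\}$ in decreasing order ending at $0$, and at each stage cover the remaining integers by singleton affine subspaces, padded to $k-1$ with empty or repeated ones. For the inductive step, I would enumerate $S_{k,M}$ slice by slice, in the order $\Sigma_{c_0},\Sigma_{c_0-1},\ldots,\Sigma_0$, using within each $\Sigma_c$ the lift of the inductive enumeration of $S_{k-c,M-1}$ (interpreted via the enumeration at parameter $(M-1)(p-1)+1$ whenever $k-c$ exceeds that bound, since the two sets then coincide with $[0,p-1]^{M-1}$). This ordering terminates at $0^M=(0^{M-1},0)$, since the inductive enumeration of $\Sigma_0$ ends at $0^{M-1}$.

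At stage $i$, let $c^*$ denote the slice containing $a^{(i)}$, so that slices $\Sigma_c$ with $c>c^*$ are fully removed, slices with $c<c^*$ are intact, and some initial segment $I\subseteq S_{k-c^*,M-1}$ of the inductive enumeration has been removed from $\Sigma_{c^*}$. The covering I would construct combines the $c^*$ hyperplanes $H_c:=\{z:z_M=c\}$ for $c=0,1,\ldots,c^*-1$, each of which covers one intact lower slice $\Sigma_c$ while avoiding every removed point (whose last coordinate is at least $c^*>c$), with the $k-c^*-1$ affine subspaces in $H_{c^*}$ obtained by lifting the inductive $(k-c^*-1)$-cover of $S_{k-c^*,M-1}\setminus I$. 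The total count is $c^*+(k-c^*-1)=k-1$; the intact lower slices are covered by the corresponding $H_c$ and the remainder of $\Sigma_{c^*}$ by the lifted cover; each $H_c$ with $c<c^*$ clearly avoids the removed set, and each lifted subspace lies in $H_{c^*}$ and avoids $I\times\{c^*\}$ by the inductive hypothesis, hence avoids every removed point.

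The main subtlety is the edge case $k-c^*>(M-1)(p-1)+1$: the inductive hypothesis must then be applied at parameter $(M-1)(p-1)+1$ and supplies only $(M-1)(p-1)$ subspaces, so the deficit of $k-c^*-1-(M-1)(p-1)$ must be absorbed by padding with trivial (empty or repeated) affine subspaces. This is legitimate under the convention implicit in Definition \ref{def:seqCS} that the sets $C_{j,t}$ may be empty, because every form of a translation-invariant system is nonzero and so lies in $\langle\emptyset\rangle^c=\mb{F}_p^d\setminus\{0\}$. Beyond this bookkeeping, the induction proceeds routinely once the slice-by-slice ordering is in place.
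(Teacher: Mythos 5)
Your proof takes essentially the same approach as the paper's: induction on $M$, slicing $S_{k,M}$ by the value of the last coordinate and enumerating slices in decreasing order of that coordinate, then at each stage combining the hyperplanes $\{z_M=c\}$ for $c<c^*$ with a lifted inductive cover inside the current slice, for a total of $c^*+(k-c^*-1)=k-1$ affine subspaces. The only point worth flagging is that your ``edge case'' is a non-issue: the inductive hypothesis, as stated, holds for every positive integer parameter (the proposition does not assume $k\le M(p-1)+1$), so it can be invoked directly at parameter $k-c^*$ to produce a genuine $(k-c^*-1)$-cover without any padding---and in any case $(k-1)$-coverability explicitly allows repeated subspaces, so the appeal to a convention in Definition~\ref{def:seqCS} is unnecessary.
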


\begin{proof}
We argue by induction on $M$.

For $M=1$, we have two cases, according to whether $k<p$ or $k\geq p$. If $k<p$, then $S_{k,1}=[0,k-1]$, and then, setting $a^{(i)}:=k-i$ for $i\in [k]$, it is readily seen that $S_{k,1}\setminus \{a^{(1)},\ldots,a^{(i)}\}$ is $(k-1)$-coverable excluding $\{a^{(1)},\ldots,a^{(i)}\}$ for each $i\in [k]$ (we take singletons as the affine subspaces; recall that repetition of the covering subspaces is allowed). If $k\geq p$, then $S_{k,1}=[0,p-1]$, but then we can set  $a^{(i)}:=p-i$ for $i\in [p]$, and we then see again that $S_{k,1}\setminus \{a^{(1)},\ldots,a^{(i)}\}$ is $(k-1)$-coverable excluding $\{a^{(1)},\ldots,a^{(i)}\}$ for each $i\in[p]$ (using singletons again).

For $M>1$, we can assume by induction that the case $M-1$ holds. For $j\in [0,p-1]$ let $P_j = \{(t_1,\ldots,t_M)\in [0,p-1]^M: t_M=j\}$, and let $r_j:=|S_{k,M}\cap P_j|$. We shall construct an appropriate sequence $(a^{(i)})_{i\in [\ell]}$ by first constructing its initial segment $a^{(1)},\ldots,a^{(r_{p-1})}$ covering $S_{k,M}\cap P_{p-1}$, then continuing with the next segment $a^{(r_{p-1}+1)},\ldots,a^{(r_{p-1}+r_{p-2})}$ covering $S_{k,M}\cap P_{p-2}$, and so on, until we end up with a sequence covering all of $S_{k,M}$, with final term $a^{(\ell)}=0^M$, with $\ell=r_{p-1}+r_{p-2}+\cdots+r_0 = |S_{k,M}|$. 

We begin with $P_{p-1}$, noting the following fact, which enables us to use the case $M-1$:
\[
S_{k,M}\cap P_{p-1}=\{ t\in [0,p-1]^M: (t_1,\ldots,t_{M-1})\in S_{k-p+1,M-1}, t_M= p-1\}.
\]
Let $b^{(1)},\ldots,b^{(r_{p-1})}$ be a sequence of distinct points in $S_{k-p+1,M-1}$ with $r_{p-1}=|S_{k-p+1,M-1}|$, given by our inductive assumption in the case $M-1$. Note that if $k< p$ then $S_{k,M}\cap P_{p-1}=\emptyset$ and the sequence $b^{(1)},\ldots,b^{(r_{p-1})}$ is empty. Let $a^{(i)}:=(b^{(i)},p-1)\in [0,p-1]^M$ for $i\in [r_{p-1}]$. We claim that $a^{(1)},\ldots,a^{(r_{p-1})}$ form an appropriate initial segment of our desired sequence. To prove this we need to show that $S_{k,M}\setminus\{a^{(1)},\ldots,a^{(i)}\}$ is $(k-1)$-coverable excluding $\{a^{(1)},\ldots,a^{(i)}\}$ for each $i\in [r_{p-1}]$.
We know that for every such $i$ there exist affine subspaces $H_1,\ldots,H_{k-p}$ in $[0,p-1]^{M-1}$ that cover $S_{k-p+1,M-1}\setminus\{b^{(1)},\ldots,b^{(i)}\}$ excluding $\{b^{(1)},\ldots,b^{(i)}\}$. Consider the affine subspaces $H_i'$ of $\mb{F}_p^M$ defined by $H'_i:=\{(t_1,\ldots,t_M)\in[0,p-1]^M:(t_1,\ldots,t_{M-1})\in H_i, t_M=p-1\}$, $i\in [k-p]$. Since
\[
(S_{k,M}\setminus\{a^{(1)},\ldots,a^{(i)}\})\cap P_{p-1} = \big(S_{k-p+1,M-1}\setminus\{b^{(1)},\ldots,b^{(i)}\}\big)\times \{p-1\},
\]
it is clear that the $H_i'$ cover $(S_{k,M}\setminus\{a^{(1)},\ldots,a^{(i)}\})\cap P_{p-1}$ excluding $\{a^{(1)},\ldots,a^{(i)}\}$. To complete our covering, we just take the hyperplanes $P_j$ for $j\in [0,p-2]$. In total we thus have $k-p$ affine subspaces $H_i'$ and $p-1$ affine hyperplanes $P_j$. Hence we have found a cover of $S_k^M\setminus\{a^{(1)},\ldots,a^{(i)}\}$ excluding $\{a^{(1)},\ldots,a^{(i)}\}$ by $k-1$ affine subspaces, as required.

We now construct the second segment of our sequence similarly, by  covering sequentially the points of $S_{k,M}\cap P_{p-2}$. In this case, by induction we will have $k-p+1$ affine subspaces coming from applying the case $M-1$ to $S_{k-p+2,M-1}$, and $p-2$ hyperplanes that will cover the points $t\in S_{k,M}$ such that $t_M<p-2$. Again, the total number of affine subspaces in our cover is $k-1$, as required. 

We can continue this process for $S_{k,M}\cap P_{p-3}$ and so on, down to $S_{k,M}\cap P_0$, where we will end up with the final point $a^{(\ell)}=0^M$. The result follows.
\end{proof}

\begin{proof}[Proof of Theorem \ref{thm:GGVN}]
For $k\le p$ the result follows from Theorem \ref{thm:basicGT}. For $k> p$,  Proposition \ref{prop:covering-s-k} tells us that for any element $z\in [0,p-1]^M$ we can find a witness sequence for the sequential CS-complexity being at most $(k-2,\ell_{z,k,M,p})$ at $z$, with the bound $\ell_{z,k,M,p}\le |S_{k,M}|$ for all $z\in S_{k,M}$ (equality in this bound is attained for $z=0^M$ in our construction). The result follows by Theorem \ref{thm:main}.
\end{proof}

\begin{remark}\label{rem:constant}
It would be interesting to improve the constant $c$ in Theorem \ref{thm:GGVN}. To do so using sequential CS-complexity could involve determining the smallest $\ell$ such that $\Phi_{k,M}$ has sequential CS-complexity at most $(k-2,\ell)$ at $0^M$. In certain special cases we can find shorter witness sequences than the one used in the proof of Proposition \ref{prop:covering-s-k}. For example, for $p=5$, $k=6$ and $M=2$, it is not hard to see that $\{1^2,0^2\}$ is a witness sequence for sequential-CS-complexity being at most $(4,2)$ at $0^2$. For $M=2$ and general $p < k$, it can be proved that the points on the line $t_1+t_2=0$ form a witness sequence at $0^2$, to obtain by Theorem \ref{thm:main} that $\big| \Lambda_{\Phi_{k,2}}\big((f_z)_{z\in S_{k,2}}\big)\big| \leq \|f_{0^2}\|_{U^{k-1}}^{2^{1-p}}$.
\end{remark}

\begin{remark}\label{rem:HCeg}
Consider the translation invariant system
\[
\Psi=\{x+y,\; x+z,\; x+2z, \; x+y+3z, \; x+2y+3z, \; x+3y+3z\}.
\]
For $p\geq 7$, this system can be seen to have CS complexity at least $2$. Indeed, the associated set is $Z=\{(1,0), (0,1), (0,2), (1,3), (2,3), (3,3)\}$, and it can be checked by inspection that $Z\setminus \{(3,3)\}$ cannot be covered by two affine lines. On the other hand, the sequential CS complexity of this system is at most $(1,2)$ at each point $P\in Z$. To see this we just need to exhibit a witness sequence of length at most 2  ending at $P$. For $P=(1,0)$ it is clear that $Z\setminus \{P\}$ can be covered by two lines, so in this case we even have a witness sequence of length 1. For $P\in Z\setminus\{(1,0)\}$, it is readily seen that $\{(1,0),P\}$ is a valid witness sequence, because we have already seen that  $Z\setminus\{(1,0)\}$ can be suitably covered by two lines, and then $Z\setminus\{(1,0),P\}$ consists of four points which can also always be covered by two lines avoiding $\{(1,0),P\}$.
\end{remark}

\begin{remark}\label{rem:scs-notsuff}
There are systems of finite true complexity for which sequential CS-complexity cannot yield true-complexity bounds via Theorem \ref{thm:main}. For example, it can be checked that, for $p\ge 23$, the translation invariant system
\[
\Psi=\{x,\; x+y,\; x+z,\; x+10y+z,\; x+y+2z,\; x+2y+2z\}
\]
has true complexity 1 but, for every $\ell$, no sequence of points in $\mb{F}_p^3$ can be a witness sequence of sequential CS-complexity of $\Psi$ being at most $(1,\ell)$. To prove this, consider the associated set for $\Psi$, namely $Z=\{(0,0),(1,0), (0,1),(10,1),(1,2),(2,2)\}$, and recall that the first step to find a witness sequence for $\Psi$ would be to find a point $P\in Z$ such that $Z\setminus \{P\}$ can be covered with two affine lines in $\mb{F}_p^2$. If this were possible, then there would be three points in $Z$ coverable by a single line. But a simple computation shows that for $p\ge 23$ no three points of $Z$ are in the same affine line. Note also that this particular example is covered by \cite[Theorem 1.5]{Man}, so there is a polynomial true-complexity bound for this system $\Psi$. It would be interesting to know if there are more refined uses of sequential CS-complexity, possibly combining it with the methods from \cite{Man,MannersTC} (or further refinements of the notion of sequential CS-complexity itself), yielding better true-complexity bounds than those obtained in this paper or in \cite{MannersTC}.
\end{remark}

\section*{Acknowledgements}
\noindent We thank the anonymous referees for useful comments that helped to improve this paper.

\section*{Funding}
\noindent All authors received funding from Spain's MICINN project PID2020-113350GB-I00. The second-named author received funding from projects KPP 133921 and
Momentum (Lend\"u- let) 30003 of the Hungarian Government. The research was also supported partially by the NKFIH ``\'Elvonal'' KKP 133921 grant and partially by the Hungarian Ministry of Innovation and Technology NRDI Office within the framework
of the Artificial Intelligence National Laboratory Program.


\begin{thebibliography}{99}
\bibitem{A&F} N. Alon, Z. F\"uredi, \emph{Covering the cube by affine hyperplanes}, European J.\ Combin.\ \textbf{14} (1993), 79--83.
\bibitem{Altman} D. Altman, \emph{On a conjecture of Gowers and Wolf}, 2021, Preprint \url{https://arxiv.org/abs/2106.15437}
\bibitem{B&S} S. Ball, O. Serra, \emph{Punctured combinatorial nullstellens\"atze}, Combinatorica \textbf{29} (5) (2009) 511--522.
\bibitem{Cand:Notes1} P. Candela, \emph{Notes on nilspaces: algebraic aspects}, Discrete Analysis, 2017, Paper No. 15, 59 pp.
\bibitem{CGSS-seq-CS-Eurocomb} P. Candela, D. Gonz\'alez-S\'anchez and B. Szegedy, \emph{A refinement of Cauchy-Schwarz complexity, with applications}. In J. Ne\v{s}et\v{r}il, G. Perarnau, J. Ru\'e, and O. Serra, editors, \emph{Extended Abstracts EuroComb 2021}, pages 293--298, Cham, 2021. Springer International Publishing.
\bibitem{CGS} P. Candela, D. Gonz\'alez-S\'anchez, B. Szegedy, \emph{On higher-order Fourier analysis in characteristic $p$}, preprint 2021.
\bibitem{Go} W. T. Gowers, \emph{A new proof of Szemer\'edi's theorem}, GAFA volume \textbf{11}, pages 465-588 (2001).
\bibitem{G&W1} W. T. Gowers, J. Wolf, \emph{The true complexity of a system of linear equations}, Proc. London Math. Soc. (3) \textbf{100} (2010), 155--176.
\bibitem{G&W2} W. T. Gowers, J. Wolf, \emph{Linear forms and quadratic uniformity for functions on $\mb{F}^n_p$}, Mathematika 57 (2011), no. 2, 215--237.
\bibitem{G&W3} W. T. Gowers, J. Wolf, \emph{Linear forms and higher-degree uniformity for functions on $\mb{F}^n_p$}, Geom. Funct. Anal. \textbf{21} (2011), no. 1, 36--69.
\bibitem{G&W4} W. T. Gowers, J. Wolf, \emph{Linear forms and quadratic uniformity for functions on $\mb{Z}_N$}, J. Anal. Math. 115 (2011), 121--186.
\bibitem{GTarith}
B. Green and T. Tao, \emph{An arithmetic regularity lemma, an associated counting lemma, and applications}, An irregular mind, volume 21 of Bolyai Soc. Math. Stud., pages 261--334. J\'anos Bolyai Math. Soc., Budapest, 2010.
\bibitem{GTarith2}
B. Green and T. Tao, \emph{An arithmetic regularity lemma, associated counting lemma, and
applications}, 2020. \url{https://arxiv.org/abs/1002.2028v3}.
\bibitem{GTlin} B. Green, T. Tao, \emph{Linear equations in primes}, 
Ann. of Math. (2) \textbf{171} (2010), no. 3, 1753--1850. 
\bibitem{HHL} H. Hatami, P. Hatami, S. Lovett, \emph{General systems of linear forms: equidistribution and true complexity}, Adv. Math. \textbf{292} (2016), 446--477.
\bibitem{Man} F. Manners, \emph{Good bounds in certain systems of true complexity one}. Discrete Anal. 2018, Paper No. 21, 40 pp. 
\bibitem{MannersTC} F. Manners, \emph{True complexity and iterated Cauchy-Schwarz}, preprint. \url{https://arxiv.org/abs/2109.05731}
\bibitem{T&V} T. Tao, V. Vu, \emph{Additive combinatorics}. Cambridge Studies in Advanced Mathematics, 105. Cambridge University Press, Cambridge, 2006.
\bibitem{T&Z-Low} T. Tao and T. Ziegler, \emph{The inverse conjecture for the gowers norm over finite fields in low characteristic}, Ann. Comb. \textbf{16} (2012), 121-188.
\bibitem{WolfFF} J. Wolf, \emph{Finite field models in arithmetic combinatorics---ten years on}. Finite Fields Appl. 32 (2015), 233--274.
\end{thebibliography}
\end{document}